\newcommand{\N}{{\mathds{N}}}
\newcommand{\Z}{{\mathds{Z}}}
\newcommand{\Q}{{\mathds{Q}}}
\newcommand{\R}{{\mathds{R}}}
\newcommand{\C}{{\mathds{C}}}
\newcommand{\T}{{\mathds{T}}}
\newcommand{\D}{{\mathfrak{D}}}
\newcommand{\A}{{\mathfrak{A}}}
\newcommand{\B}{{\mathfrak{B}}}
\newcommand{\G}{{\mathfrak{G}}}
\newcommand{\Qadic}[1]{{\mathds{Z}\left[\frac{1}{#1}\right]}}
\newcommand{\solenoid}[1]{\mathcal{S}_{\,#1}}
\newcommand{\ncsolenoid}[1]{{\alg{S}_{#1}}}
\newcommand{\bigslant}[2]{{\raisebox{.2em}{$#1$}\left/\raisebox{-.2em}{$#2$}\right.}}
\newcommand{\Lip}{{\mathsf{L}}}
\newcommand{\qpropinquity}[1]{{\mathsf{\Lambda}_{#1}}}
\newcommand{\propinquity}[1]{{\mathsf{\Lambda}^\ast_{#1}}}
\newcommand{\Kantorovich}[1]{{\mathsf{mk}_{#1}}}
\newcommand{\Haus}[1]{{\mathsf{Haus}_{#1}}}
\newcommand{\StateSpace}{{\mathscr{S}}}
\newcommand{\mongekant}{{Mon\-ge-Kan\-to\-ro\-vich metric}}
\newcommand{\Lqcms}{{\JLL} quan\-tum com\-pact me\-tric spa\-ce}
\newcommand{\unit}{1}
\newcommand{\sa}[1]{{\mathfrak{sa}\left({#1}\right)}}
\newcommand{\JLL}{Lei\-bniz}
\newcommand{\dom}[1]{{\operatorname*{dom}({#1})}}
\newcommand{\diam}[2]{{\mathrm{diam}\left({#1},{#2}\right)}}
\newcommand{\bridgereach}[2]{{\varrho\left({#1}\middle|{#2}\right)}}
\newcommand{\bridgeheight}[2]{{\varsigma\left({#1}\middle|{#2}\right)}}
\newcommand{\bridgelength}[2]{{\lambda\left({#1}\middle|{#2}\right)}}
\newcommand{\bridgenorm}[2]{{\mathsf{bn}_{ {#1}  }\left({#2}\right)}}
\newcommand{\alg}[1]{{\mathfrak{#1}}}
\theoremstyle{plain}
\newtheorem{theorem}{Theorem}[section]
\newtheorem{corollary}[theorem]{Corollary}
\newtheorem{lemma}[theorem]{Lemma}
\newtheorem{proposition}[theorem]{Proposition}
\newtheorem{theorem-definition}[theorem]{Theorem-Definition}
\theoremstyle{definition}
\newtheorem{definition}[theorem]{Definition}
\newtheorem{notation}[theorem]{Notation}
\newtheorem{hypothesis}[theorem]{Hypothesis}
\theoremstyle{remark}
\newtheorem{remark}[theorem]{Remark}
\renewcommand{\geq}{\geqslant}
\renewcommand{\leq}{\leqslant}
\numberwithin{equation}{section}
\begin{document}

\title{Noncommutative Solenoids and the Gromov-Hausdorff Propinquity}
\author{Fr\'{e}d\'{e}ric Latr\'{e}moli\`{e}re}
\thanks{This work is part of the project supported by the grant H2020-MSCA-RISE-2015-691246-QUANTUM DYNAMICS}
\email{frederic@math.du.edu}
\urladdr{http://www.math.du.edu/\symbol{126}frederic}
\address{Department of Mathematics \\ University of Denver \\ Denver CO 80208}

\author{Judith Packer}
\email{packer@euclid.colorado.edu}
\urladdr{http://spot.colorado.edu/\symbol{126}packer/}
\address{Department of Mathematics \\ University of Colorado \\ Boulder CO 80309}
\thanks{This work was partially supported by a grant from the Simons Foundation (\#316981 to Judith Packer)}
\date{\today}

\subjclass[2000]{Primary:  46L89, 46L30, 58B34.}
\keywords{Noncommutative metric geometry, Gromov-Hausdorff convergence, Monge-Kantorovich distance, Quantum Metric Spaces, Lip-norms}

\begin{abstract}
We prove that noncommutative solenoids are limits, in the sense of the Gromov-Hausdorff propinquity, of quantum tori. From this observation, we prove that noncommutative solenoids can be approximated by finite dimensional quantum compact metric spaces, and that they form a continuous family of quantum compact metric spaces over the space of multipliers of the solenoid, properly metrized.
\end{abstract}

\maketitle


\section{Introduction}

The quantum Gromov-Hausdorff propinquity, introduced by the first author \cite{Latremoliere13,Latremoliere13b}, is a distance on quantum compact metric spaces which extends the topology of the Gromov-Hausdorff distance \cite{Gromov,Gromov81}. Quantum metric spaces are generalizations of Lipschitz algebras \cite{Weaver99} first discussed by Connes \cite{Connes89} and formalized by Rieffel \cite{Rieffel98a}. The propinquity strengthens Rieffel's quantum Gromov-Hausdorff distance \cite{Rieffel00} to be well-adapted to the C*-algebraic framework, in particular by making *-isomorphism a necessary condition for distance zero \cite{Latremoliere15b}. The propinquity thus allows us to address questions from mathematical physics, such as the problem of finite dimensional approximations of quantum space times \cite{Connes97,Madore91,Douglas01,Taylor01},\cite[Ch. 7]{Madore}. Matricial approximations of physical theory motivates our project, which requires, at this early stage, the study of many different examples of quantum spaces. 

Recently, the first author proved that quantum tori form a continuous family for the propinquity, and admit finite dimensional approximations via so-called fuzzy tori \cite{Latremoliere13c}. This paper, together with the work on AF algebras done in \cite{Latremoliere15d}, explores the connection between our geometric approach to limits of C*-algebras and the now well studied approach via inductive limits, which itself played a role is quantum statistical mechanics \cite{Bratteli79}. We thus bring noncommutative solenoids, studied by the authors in \cite{Latremoliere11c,Latremoliere13d,Latremoliere14d}, and which are inductive limits of quantum tori, into the realm of noncommutative metric geometry. Our techniques apply to more general inductive limits on which projective limits of compact metrizable groups act ergodically. Noncommutative solenoids are interesting examples since they also are C*-crossed products, whose metric structures are still a challenge to understand. Irrational noncommutative solenoids \cite{Latremoliere11c} are non-type I C*-algebras, and many are even simple, thus they are examples of quantum spaces which are far from commutative. 

In our main result, we prove that noncommutative solenoids are limits, for the quantum Gromov-Hausdorff propinquity, of quantum tori. As corollaries, we then show that the map from the solenoid group to the family of noncommutative solenoids is continuous for the quantum propinquity, and that noncommutative solenoids are limits of fuzzy tori, namely C*-crossed products of finite cyclic groups acting on themselves by translation. As noncommutative solenoids have nontrivial $K_1$ group \cite{Latremoliere11c}, they are not AF algebras, so our proof that they are limits of finite dimensional C*-algebras illustrates the difference and potential usefulness of our metric geometric approach. Moreover, noncommutative solenoids' connection with wavelet theory \cite{Latremoliere14d} means that our result is a first step in what could be a metric approach to wavelet theory, by means of finite dimensional approximations. Last, metric approximations may prove a useful tool in the study of modules over noncommutative solenoids, initiated in \cite{Latremoliere13d, Latremoliere14d}, as recent research in noncommutative metric geometry is concerned in part with the category of modules over quantum metric spaces \cite{Rieffel15}

Noncommutative solenoids, introduced in \cite{Latremoliere11c} and studied further in \cite{Latremoliere13d,Latremoliere14d} by the authors, are the twisted group C*-algebras of the Cartesian square of the subgroups of $\Q$ consisting of the $p$-adic rationals for some $p\in \N\setminus\{0,1\}$. We begin with the classification of the multipliers of these groups.

\begin{theorem-definition}[\cite{Latremoliere11c}]\label{solenoid-def}
Let $p\in\N\setminus\{0,1\}$. The inductive limit of:
\begin{equation*}
\xymatrix{
\Z \ar^{k\mapsto pk}[r] & \Z \ar^{k\mapsto pk}[r] & \Z \ar^{k\mapsto pk}[r] &\cdots
}
\end{equation*}
is the group of $p$-adic rational numbers:
\begin{equation*}
\Qadic{p} = \left\{ \frac{q}{p^k} : q\in \Z, k \in \N \right\}\text{.}
\end{equation*}
The Pontryagin dual of $\Qadic{p}$ is the solenoid group:
\begin{equation*}
\begin{split}
\solenoid{p} &= \xymatrix{\varprojlim \T & \ar_{z\mapsto z^p}[l] \T & \ar_{z\mapsto z^p}[l] \T & \ar_{z\mapsto z^p}[l] \cdots} = \left\{ \left(z_n\right)_{n\in\N} \in \T^\N : \forall n\in \N \quad z_{n+1}^p = z_n \right\}\text{,}
\end{split}
\end{equation*}
where the dual pairing is given, for all $q\in\Z$, $k\in\N$, and $(z_n)_{n\in\N}\in\solenoid{p}$, by $\left< \frac{q}{p^k}, (z_n)_{n\in\N} \right> = z_k^q \text{.}$

For any $\theta = (\theta_n)_{n\in\N}\in\solenoid{p}$, and for all $q_1,q_2,q_3,q_4 \in \Z$ and $k_1,k_2,k_3,k_4 \in \N$, we define:
\begin{equation*}
\Psi_\theta: \left(\left(\frac{q_1}{p^{k_1}},\frac{q_2}{p^{k_2}}\right),\left(\frac{q_3}{p^{k_3}},\frac{q_4}{p^{k_4}}\right)\right) = \theta_{k_1 + k_4}^{q_1 q_4}\text{.}
\end{equation*}

For any multiplier $f$ of $\Qadic{p}\times\Qadic{p}$, there exists a unique $\theta \in \solenoid{p}$ such that $f$ is cohomologous to $\Psi_\theta$.
\end{theorem-definition}

Thus, formally, noncommutative solenoids are defined by:

\begin{definition}\label{ncsolenoid-def}
A \emph{noncommutative solenoid} $\ncsolenoid{\theta}$, for some $\theta\in\solenoid{p}$, is the twisted group C*-algebra $C^\ast\left(\Qadic{p}\times\Qadic{p},\Psi_\theta\right)$. 
\end{definition}

We compute the $K$-theory of noncommutative solenoids in \cite{Latremoliere11c} in terms of the multipliers of $\Qadic{p}\times\Qadic{p}$, identified with elements on the solenoid via Theorem-Definition (\ref{solenoid-def}); we then classify noncommutative solenoids up to their multiplier.

As the compact group $\solenoid{p}^2$ acts on $\ncsolenoid{\theta}$ for any $\theta \in \solenoid{p}$ via the dual action, any continuous length function on $\solenoid{p}^2$ induces a quantum metric structure on $\ncsolenoid{\theta}$, as described in \cite{Rieffel98a}. A quantum metric structure is given by a noncommutative analogue of the Lipschitz seminorm as follows:

\begin{notation}
If $\A$ is a C*-algebra with unit, then the norm on $\A$ is denoted by $\|\cdot\|_\A$, while the unit of $\A$ is denoted by $\unit_\A$. The state space of $\A$ is denoted by $\StateSpace(\A)$, and the subspace of self-adjoint elements in $\A$ is denoted by $\sa{\A}$.
\end{notation}

\begin{definition}[\cite{Rieffel98a,Rieffel99,Latremoliere13}]
A pair $(\A,\Lip)$ is a \emph{\Lqcms} when $\A$ is a unital C*-algebra and $\Lip$ is a seminorm defined on some dense Jordan-Lie subalgebra $\dom{\Lip}$ of the space of self-adjoint elements $\sa{\A}$ of $\A$, called a \emph{Lip-norm}, such that:
\begin{enumerate}
\item $\{a\in\sa{\A} : \Lip(a) = 0 \} = \R\unit_\A$,
\item $\max\left\{\Lip\left(\frac{ab+ba}{2}\right), \Lip\left(\frac{ab-ba}{2i}\right)\right\} \leq \|a\|_\A\Lip(b) + \|b\|_\A\Lip(a)\text{,}$
\item the {\mongekant} $\Kantorovich{\Lip}$ dual to $\Lip$ on $\StateSpace(\A)$ by setting, for all $\varphi, \psi \in \StateSpace(\A)$ by $\Kantorovich{\Lip}(\varphi,\psi) = \sup\left\{|\varphi(a) - \psi(a)| : a\in\dom{\Lip}, \Lip(a) \leq 1 \right\}$ induces the weak* topology on $\StateSpace(\A)$,
\item $\Lip$ is lower semi-continuous with respect to $\|\cdot\|_\A$.
\end{enumerate}
\end{definition}

Classical examples of Lip-norms are given by the Lipschitz seminorms on the C*-algebras of $\C$-valued continuous functions on compact metric spaces. An important source of noncommutative example is given by:

\begin{theorem-definition}[{\cite{Rieffel98a}}]\label{Rieffel-Lip-norm-thm}
Let $\alpha$ be a strongly continuous action by *-automorphisms of a compact group $G$ on a unital $C^\ast$-algebra $\A$ and let $\ell$ be a continuous length function on $G$. For all $a\in\sa{\A}$, we define:
\begin{equation*}
\Lip_{\alpha,\ell}(a) = \sup\left\{ \frac{\|a-\alpha^g(a)\|_\A}{\ell(g)} : g \in \G, \text{$g$ is not the unit of $G$} \right\}\text{.}
\end{equation*}
Then $\Lip_{\alpha,\ell}$ is a Lip-norm on $\A$ if and only if $\alpha$ is ergodic, i.e. $\{a\in\A : \forall g \in G\quad \alpha^g(a) = a \} = \C\unit_\A$. We note that $\Lip_{\alpha,\ell}$ is always lower semi-continuous.
\end{theorem-definition}

Theorem (\ref{Rieffel-Lip-norm-thm}) is thus, in particular, applicable to any dual action on the twisted group C*-algebra of some discrete Abelian group, such as noncommutative solenoids or quantum tori. 

This paper continues the study of the geometry of classes of quantum compact metric spaces under noncommutative analogues of the Gromov-Hausdorff distance, with the perspective that such a new geometric approach to the study of C*-algebras may prove useful in mathematical physics and C*-algebra theory. Our focus in this paper is a noncommutative analogue of the Gromov-Hausdorff distance devised by the first author \cite{Latremoliere13} as an answer to many early challenges in this program, and whose construction begins with a particular mean to relate two {\Lqcms s} via an object akin to a correspondence.

\begin{definition}
A \emph{bridge} from a unital C*-algebra $\A$ to a unital C*-algebra $\B$ is a quadruple $(\D,\omega,\pi_\A,\pi_\B)$ where:
\begin{enumerate}
\item $\D$ is a unital C*-algebra,
\item the element $\omega$, called the \emph{pivot} of the bridge, satisfies $\omega\in\D$ and $\StateSpace_1(\D|\omega)\not=\emptyset$, where:
\begin{equation*}
\StateSpace_1(\D|\omega)= \left\{ \varphi \in \StateSpace(\D) : \varphi((1-\omega^\ast\omega))=\varphi((1-\omega \omega^\ast)) = 0 \right\}
\end{equation*}
is called the \emph{$1$-level set of $\omega$},
\item $\pi_\A : \A\hookrightarrow \D$ and $\pi_\B : \B\hookrightarrow\D$ are unital *-monomorphisms.
\end{enumerate}
\end{definition}

There always exists a bridge between any two arbitrary {\Lqcms s} \cite{Latremoliere13}. The quantum propinquity is computed from a numerical quantity called the length of a bridge. We will denote the Hausdorff (pseudo)distance associated with a (pseudo)metric $\mathrm{d}$ by $\Haus{\mathrm{d}}$ \cite{Hausdorff}.

First introduced in \cite{Latremoliere13}, the length of a bridge is computed from two numbers, the height and the reach of a bridge. The height of a bridge assesses the error we make by replacing the state spaces of the {\Lqcms s} with the image of the $1$-level set of the pivot of the bridge, using the ambient {\mongekant}. 

\begin{definition}
Let $(\A,\Lip_\A)$ and $(\B,\Lip_\B)$ be two {\Lqcms s}. The \emph{height} $\bridgeheight{\gamma}{\Lip_\A,\Lip_\B}$ of a bridge $\gamma = (\D,\omega,\pi_\A,\pi_\B)$ from $\A$ to $\B$, and with respect to $\Lip_\A$ and $\Lip_\B$, is given by:
\begin{multline*}
\max\left\{ \Haus{\Kantorovich{\Lip_\A}}(\StateSpace(\A), \left\{ \varphi\circ\pi_\A : \varphi \in \StateSpace_1(\D|\omega)\right\} ), \right.\\ \left. \Haus{\Kantorovich{\Lip_\B}}(\StateSpace(\B), \left\{\varphi\in\pi_\B : \varphi\in\StateSpace_1(\D|\omega)\right\}) \right\}\text{.}
\end{multline*}
\end{definition}

The second quantity measures how far apart the images of the balls for the Lip-norms are in $\A\oplus\B$; to do so, they use a seminorm on $\A\oplus\B$ built using the bridge:
\begin{definition}[\cite{Latremoliere13}]
Let $\A$ and $\B$ be two unital C*-algebras. The \emph{bridge seminorm} $\bridgenorm{\gamma}{\cdot}$ of a bridge $\gamma = (\D,\omega,\pi_\A,\pi_\B)$ from $\A$ to $\B$ is the seminorm defined on $\A\oplus\B$ by $\bridgenorm{\gamma}{a,b} = \|\pi_\A(a)\omega - \omega\pi_\B(b)\|_\D$ for all $(a,b) \in \A\oplus\B$.
\end{definition}

We implicitly identify $\A$ with $\A\oplus\{0\}$ and $\B$ with $\{0\}\oplus\B$ in $\A\oplus\B$ in the next definition, for any two spaces $\A$ and $\B$.

\begin{definition}[\cite{Latremoliere13}]
Let $(\A,\Lip_\A)$ and $(\B,\Lip_\B)$ be two {\Lqcms s}. The \emph{reach} $\bridgereach{\gamma}{\Lip_\A,\Lip_\B}$ of a bridge $\gamma = (\D,\omega,\pi_\A,\pi_\B)$ from $\A$ to $\B$, and with respect to $\Lip_\A$ and $\Lip_\B$, is given by:
\begin{equation*}
\Haus{\bridgenorm{\gamma}{\cdot}}\left( \left\{a\in\sa{\A} : \Lip_\A(a)\leq 1\right\} , \left\{ b\in\sa{\B} : \Lip_\B(b) \leq 1 \right\}  \right) \text{.}
\end{equation*}
\end{definition}

We thus choose a natural synthetic quantity to summarize the information given by the height and the reach of a bridge:

\begin{definition}[\cite{Latremoliere13}]
Let $(\A,\Lip_\A)$ and $(\B,\Lip_\B)$ be two {\Lqcms s}. The \emph{length} $\bridgelength{\gamma}{\Lip_\A,\Lip_\B}$ of a bridge $\gamma = (\D,\omega,\pi_\A,\pi_\B)$ from $\A$ to $\B$, and with respect to $\Lip_\A$ and $\Lip_\B$, is given by $\max\left\{\bridgeheight{\gamma}{\Lip_\A,\Lip_\B}, \bridgereach{\gamma}{\Lip_\A,\Lip_\B}\right\}\text{.}$
\end{definition}

The quantum Gromov-Hausdorff propinquity is constructed from bridges, though the construction requires some care. We refer to \cite{Latremoliere13} for the construction, and summarize here the properties which we need in this paper.

\begin{theorem-definition}[\cite{Latremoliere13}]\label{def-thm}
Let $\mathcal{L}$ be the class of all {\Lqcms s}. There exists a class function $\qpropinquity{}$ from $\mathcal{L}\times\mathcal{L}$ to $[0,\infty) \subseteq \R$ such that:
\begin{enumerate}
\item for any $(\A,\Lip_\A), (\B,\Lip_\B) \in \mathcal{L}$ we have:
\begin{equation*}
0\leq \qpropinquity{}((\A,\Lip_\A),(\B,\Lip_\B)) \leq \max\left\{\diam{\StateSpace(\A)}{\Kantorovich{\Lip_\A}}, \diam{\StateSpace(\B)}{\Kantorovich{\Lip_\B}}\right\}\text{,}
\end{equation*}
\item for any $(\A,\Lip_\A), (\B,\Lip_\B) \in \mathcal{L}$ we have:
\begin{equation*}
\qpropinquity{}((\A,\Lip_\A),(\B,\Lip_\B)) = \qpropinquity{}((\B,\Lip_\B),(\A,\Lip_\A))\text{,}
\end{equation*}
\item for any $(\A,\Lip_\A), (\B,\Lip_\B), (\alg{C},\Lip_{\alg{C}}) \in \mathcal{L}$ we have:
\begin{equation*}
\qpropinquity{}((\A,\Lip_\A),(\alg{C},\Lip_{\alg{C}})) \leq \qpropinquity{}((\A,\Lip_\A),(\B,\Lip_\B)) + \qpropinquity{}((\B,\Lip_\B),(\alg{C},\Lip_{\alg{C}}))\text{,}
\end{equation*}
\item for all $(\A,\Lip_\A), (\B,\Lip_\B) \in \mathcal{L}$ and for any bridge $\gamma$ from $\A$ to $\B$, we have $\qpropinquity{}((\A,\Lip_\A), (\B,\Lip_\B)) \leq \bridgelength{\gamma}{\Lip_\A,\Lip_\B}\text{,}$
\item for any $(\A,\Lip_\A), (\B,\Lip_\B) \in \mathcal{L}$, we have $\qpropinquity{}((\A,\Lip_\A),(\B,\Lip_\B)) = 0$ if and only if $(\A,\Lip_\A)$ and $(\B,\Lip_\B)$ are isometrically isomorphic, i.e. if and only if there exists a *-isomorphism $\pi : \A \rightarrow\B$ with $\Lip_\B\circ\pi = \Lip_\A$, or equivalently there exists a *-isomorphism $\pi : \A \rightarrow\B$ whose dual map $\pi^\ast$ is an isometry from $(\StateSpace(\B),\Kantorovich{\Lip_\B})$ into $(\StateSpace(\A),\Kantorovich{\Lip_\A})$,

\item if $\Xi$ is a class function from $\mathcal{L}\times \mathcal{L}$ to $[0,\infty)$ which satisfies Properties (2), (3) and (4) above, then $\Xi((\A,\Lip_\A), (\B,\Lip_\B)) \leq \qpropinquity{}((\A,\Lip_\A),(\B,\Lip_\B))$ for all $(\A,\Lip_\A)$ and $(\B,\Lip_\B)$ in $\mathcal{L}$,
\item the topology induced by $\qpropinquity{}$ on the class of classical metric spaces agrees with the topology induced by the Gromov-Hausdorff distance.
\end{enumerate}
\end{theorem-definition}

The study of finite dimensional approximations of quantum compact metric spaces for the quantum propinquity is an important topic in noncommutative metric geometry, with results about the quantum tori \cite{Latremoliere05,Latremoliere13c}, spheres \cite{Rieffel10c,Rieffel15}, and AF algebras \cite{Latremoliere15d}. It is in general technically very difficult to construct natural approximations, while their existence is only known under certain certain quantum topological properties (pseudo-diagonality) \cite{Latremoliere15}. Moreover, quantum tori have been an important test case for our theory, with work on the continuity of the family of quantum tori \cite{Latremoliere13c}, and perturbations of metrics for curved quantum tori \cite{Latremoliere15c}. We refer to \cite{Latremoliere15b} for a survey of the theory of quantum compact metric spaces and the Gromov-Hausdorff propinquity. 

Last, we note that all our results are valid for the dual Gromov-Hausdorff propinquity \cite{Latremoliere13b,Latremoliere14} and therefore for Rieffel's quantum Gromov-Hausdorff distance \cite{Rieffel00}.

\section{Lip-norms from projective limits of compact groups}

The first step in obtaining our results about noncommutative solenoids consists in constructing a natural metric on the countable product $\prod_{n\in\N} G_n$ of a sequence $(G_n)_{n\in\N}$ of compact metrizable groups. Our metric is inspired by a standard construction of metrics on the Cantor set, and is motivated by the desire to have the sequence of subgroups $\left(\prod_{n>N}G_n\right)_{N\in\N}$ converge to the trivial group for the induced Hausdorff distance. This latter property will be the key to our computation of estimates on the propinquity later on. Our metrics are constructed from length functions. We recall that $\ell$ is a length function on a group $G$ with unit $e$ when:
\begin{enumerate}
\item for any $x\in G$, the length $\ell(x)$ is $0$ if and only if $x  = e$,
\item $\ell(x) = \ell\left(x^{-1}\right)$ for all $x\in G$,
\item $\ell(xy) \leq \ell(x) + \ell(y)$ for all $x,y \in G$.
\end{enumerate}

\begin{hypothesis}\label{group-hyp}
Let $(G_n)_{n\in\N}$ be a sequence of compact metrizable groups, and for each $n \in\N$ let $\ell_n$ be a continuous length function on $G_n$. Let $M \geq \diam{G_0}{\ell_0}$. Let:
\begin{equation*}
\mathds{G} = \prod_{n\in\N} G_n = \left\{ (g_n)_{n\in\N} : \forall n\in\N \quad g_n \in G_n \right\}\text{,}
\end{equation*}
endowed with the product topology. With the pointwise operations, $\mathds{G}$ is a compact group. We denote the unit of $\mathds{G}$ by $1$ and, by abuse of notation, we also denote the unit of $G_n$ by $1$ for all $n\in\N$.
\end{hypothesis}
 
\begin{definition}\label{solenoid-length-def}
Let Hypothesis (\ref{group-hyp}) be given. We define the length function $\ell_\infty$ on $\mathds{G}$ by setting, for any $g = (g_n)_{n\in\N}$ in $\mathds{G}$:
\begin{equation*}
\ell_\infty(g) = \inf\left\{ \varepsilon > 0 : \forall n \in\N\quad n < \frac{M}{\varepsilon} \implies \ell_n(g_n) \leq \varepsilon \right\}\text{.}
\end{equation*}
\end{definition}

The basic properties of our metric are given by:

\begin{proposition}\label{solenoid-length-prop}
Assume Hypothesis (\ref{group-hyp}). The length function $\ell_\infty$ on $\mathds{G} = \prod_{n\in\N} G_n$ from Definition (\ref{solenoid-length-def}) is continuous for the product topology on the compact group $\mathds{G}$, and thus metrizes this topology. Moreover, if for all $N \in \N$, we set 
$\mathds{G}^{(N)} = \left\{ (g_n)_{n\in\N} \in \mathds{G} : \forall j\in\{0,\ldots, N\} \quad g_j = 1 \right\}\text{,}$
then $\mathds{G}^{(N)}$ is a closed subgroup of $\mathds{G}$ and:
\begin{equation}\label{diam-solenoid-eq}
\diam{\mathds{G}^{(N)}}{\ell_\infty} \leq \frac{M}{N+1} \text{,}
\end{equation}
and thus in particular, if $1\in\mathds{G}$ is the unit of $\mathds{G}$:
\begin{equation}\label{cv-solenoid-eq}
\lim_{N\rightarrow\infty} \Haus{\ell_\infty}(\mathds{G}^{(N)}, \{1\}) = 0\text{.}
\end{equation}
\end{proposition}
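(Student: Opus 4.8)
The plan is to first establish a closed-form expression for $\ell_\infty$ and then read off every assertion from it. Concretely, I would show that for all $g = (g_n)_{n\in\N}\in\mathds{G}$,
\[
\ell_\infty(g) = \sup_{n\in\N}\min\left(\frac{M}{n},\ell_n(g_n)\right),
\]
with the convention that the $n=0$ term is $\ell_0(g_0)$ (i.e. $\tfrac{M}{0}=+\infty$). This is elementary rewriting: for $n\geq 1$ the implication $n<\tfrac{M}{\varepsilon}\implies\ell_n(g_n)\leq\varepsilon$ is equivalent to $\varepsilon\geq\tfrac{M}{n}$ or $\varepsilon\geq\ell_n(g_n)$, that is $\varepsilon\geq\min(\tfrac{M}{n},\ell_n(g_n))$, while for $n=0$ it is just $\varepsilon\geq\ell_0(g_0)$; hence the set of $\varepsilon>0$ satisfying the defining condition is $(0,\infty)\cap[s,\infty)$ with $s=\sup_n\min(\tfrac{M}{n},\ell_n(g_n))$, whose infimum is $s$ (the case $s=0$ is immediate and forces $g=1$). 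Since $M\geq\diam{G_0}{\ell_0}$ gives $\ell_0(g_0)\leq M$, the function $\ell_\infty$ is $[0,M]$-valued. The length-function axioms then follow termwise: definiteness because $\ell_n(g_n)=0\iff g_n=1$ and $\tfrac{M}{n}>0$; symmetry because $\ell_n(g_n)=\ell_n(g_n^{-1})$; and subadditivity from $\ell_n(g_nh_n)\leq\ell_n(g_n)+\ell_n(h_n)$ combined with the elementary inequality $\min(a,b+c)\leq\min(a,b)+\min(a,c)$ for $a,b,c\geq 0$, applied before taking the supremum.

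For continuity, I would approximate $\ell_\infty$ by its finite truncations $\ell_\infty^{(k)}(g)=\max_{0\leq n\leq k}\min(\tfrac{M}{n},\ell_n(g_n))$. Each $\ell_\infty^{(k)}$ is continuous, being a maximum of finitely many maps $g\mapsto\min(\tfrac{M}{n},\ell_n(g_n))$, each of which is the composite of the continuous coordinate projection $\pi_n\colon\mathds{G}\to G_n$, the continuous length $\ell_n$, and a minimum with a constant. Since every term indexed by $n>k$ is at most $\tfrac{M}{n}\leq\tfrac{M}{k+1}$, we get $0\leq\ell_\infty(g)-\ell_\infty^{(k)}(g)\leq\tfrac{M}{k+1}$ for all $g$, so $\ell_\infty^{(k)}\to\ell_\infty$ uniformly on $\mathds{G}$ and $\ell_\infty$ is continuous. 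Consequently the distance $(g,h)\mapsto\ell_\infty(g^{-1}h)$ is a continuous metric on the compact group $\mathds{G}$; its open balls are open for the product topology, so its topology is coarser than the product topology, and since the latter is compact and the former Hausdorff, the identity map between them is a homeomorphism, i.e. $\ell_\infty$ metrizes the product topology.

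For the last part, $\mathds{G}^{(N)}=\bigcap_{j=0}^{N}\pi_j^{-1}(\{1\})$ is an intersection of closed subgroups, hence a closed subgroup of $\mathds{G}$. If $g,h\in\mathds{G}^{(N)}$ then $g^{-1}h\in\mathds{G}^{(N)}$, so its first $N+1$ coordinates are trivial and only indices $n\geq N+1$ contribute:
\[
\ell_\infty(g^{-1}h)=\sup_{n\geq N+1}\min\left(\frac{M}{n},\ell_n\big((g^{-1}h)_n\big)\right)\leq\sup_{n\geq N+1}\frac{M}{n}=\frac{M}{N+1},
\]
which gives Inequality~(\ref{diam-solenoid-eq}). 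Since $1\in\mathds{G}^{(N)}$, we have $\Haus{\ell_\infty}(\mathds{G}^{(N)},\{1\})=\sup_{g\in\mathds{G}^{(N)}}\ell_\infty(g)\leq\diam{\mathds{G}^{(N)}}{\ell_\infty}\leq\tfrac{M}{N+1}\to 0$, which is Equation~(\ref{cv-solenoid-eq}).

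The only step requiring genuine care is continuity: a bare supremum of continuous functions is only lower semicontinuous in general, and the point specific to this construction is that the tail of the supremum is uniformly controlled by $\tfrac{M}{k+1}$, which promotes the truncations to a uniform — not merely pointwise — approximation and thereby yields continuity. Everything else is routine bookkeeping with the closed form above.
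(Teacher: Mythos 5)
Your proof is correct, and it is worth noting where it departs from the paper's. Both arguments share the same skeleton for metrization (continuity of $\ell_\infty$ plus definiteness, then the standard fact that a coarser Hausdorff topology on a compact space coincides with it, exactly as the paper invokes via Tychonoff), and the same estimate $\frac{M}{N+1}$ on $\mathds{G}^{(N)}$. The difference is in how continuity is obtained: the paper works directly with the infimum definition and a sequential $\varepsilon$--$N$ argument showing $\ell_\infty(g^m g^{-1})\to 0$ under pointwise convergence, whereas you first establish the closed form $\ell_\infty(g)=\sup_{n\in\N}\min\left(\frac{M}{n},\ell_n(g_n)\right)$ (your verification of this equivalence is sound, including the $n=0$ convention and the attainment of the infimum) and then deduce continuity from uniform convergence of the finite truncations, the tail being bounded by $\frac{M}{k+1}$. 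Your route buys a little more: the closed form makes the diameter bound a one-line computation, correctly identifies that one must pass through $g^{-1}h\in\mathds{G}^{(N)}$ to bound the diameter rather than just distances to the unit (a point the paper treats tersely), and yields the length-function axioms (definiteness, symmetry, subadditivity via $\min(a,b+c)\leq\min(a,b)+\min(a,c)$), which Definition (\ref{solenoid-length-def}) asserts but the paper does not verify explicitly. Your closing remark that a bare supremum of continuous functions is only lower semicontinuous, and that the uniform tail control is what rescues continuity, is exactly the right point of care; the paper's sequential argument circumvents it by relying on metrizability of the countable product.
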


\begin{proof}
We easily note that $\diam{\mathds{G}}{\ell_\infty} \leq \diam{G_0}{\ell_0}$. Indeed, if $g = (g_n)_{n\in\N} \in \mathds{G}$ then for $n = 0 < 1 = \frac{M}{\diam{G_0}{\ell_0}}$ we have $\ell_0(g_0) \leq \diam{G_0}{\ell_0}$. So by definition, $\ell_\infty(g) \leq \diam{G_0}{\ell_0}$.

Now, let $N\in\N$. We observe that if $g = (g_n)_{n\in\N} \in \mathds{G}^{(N)}$, then for all $n\leq N < \frac{M}{\frac{M}{N+1}}$ we have $\ell_n(g_n) = 0 \leq \frac{M}{N+1}$. Thus, $\ell_\infty(z) \leq \frac{M}{N+1}$. 

This proves both Expressions (\ref{diam-solenoid-eq}) and (\ref{cv-solenoid-eq}).

Assume now that $(g^m)_{m\in\N}$ converges in $\mathds{G}$ to some $g$, i.e. converges pointwise. Let $\varepsilon > 0$. Let $N = \lfloor \frac{M}{\varepsilon} \rfloor$. For each $j \in \{0,\ldots,N\}$, there exists $K_j \in \N$ such that for all $m \geq K_j$, we have $\ell_j(g^m_j g_j^{-1}) \leq \varepsilon$, by pointwise convergence. Let $K = \max\{K_j : j \in \{0,\ldots,N\}\}$. Then by construction, for all $m \geq K$, we have, for all $n < \frac{M}{\varepsilon}$, that $\ell_n (g^m_n g_n^{-1}) \leq \varepsilon$, so $\ell_\infty(g^m g^{-1})\leq \varepsilon$. Thus $\ell_\infty$ is continuous and induces a weaker topology on $\mathds{G}$ than the topology of pointwise convergence.

Assume now that $\ell_\infty((g_n)_{n\in\N}) = 0$. Fix $k\in\N$. Let $N > k$. Then $\ell_\infty((g_n)_{n\in\N})\leq\frac{M}{N+1}$. Thus by definition, $\ell_k(g_k) \leq \frac{M}{N+1}$ for all $N>k$. Thus $\ell_k(g_k) = 0$ for all $k\in\N$ and thus $g_k$ is the unit of $G_k$ for all $k\in\N$. 

Thus the topology induced by $\ell_\infty$ is Hausdorff, and thus, as the product topology on $\G^\N$ is compact by Tychonoff theorem, $\ell_\infty$ induces the product topology on $\G^\N$. This could also be easily verified directly.
\end{proof}

We shall apply Definition (\ref{solenoid-length-def}) and Proposition (\ref{solenoid-length-prop}) to projective limits, and thus we record the following corollary. \emph{We note that all our projective sequences of groups involve only epimorphisms.}

\begin{corollary}\label{solenoid-length-cor}
Let $\xymatrix{G_0 & \ar@{->>}[l]_{\rho_0} G_1 & \ar@{->>}[l]_{\rho_1} G_2 & \ar@{->>}[l]_{\rho_2} \cdots} = (G_n,\rho_n)_{n\in\N}$ be a projective sequence of compact metrizable groups, and let $\ell_n$ be a continuous length function on $G_n$ for all $n\in\N$. Let $M \geq \diam{G_0}{\ell_0}$. Let:
\begin{equation*}
G = \varprojlim (G_n,\rho_n)_{n\in\N} = \left\{ (g_n)_{n\in\N} \in \prod_{n\in\N} G_n : \forall n \in \N \quad g_n = \rho_{n}(g_{n+1}) \right\}\text{.}
\end{equation*}
The restriction to $G$ of the length function $\ell_\infty$ on $\prod_{n\in\N} G_n$ from Definition (\ref{solenoid-length-def}) metrizes the projective topology on $G$; moreover if $G_N = G \cap \mathds{G}^{(N)}$ for all $N\in\N$, then $\Haus{\ell_\infty}(G_N, \{1\}) \leq \frac{M}{N+1}$ with $1\in G$ the unit of $G$.
\end{corollary}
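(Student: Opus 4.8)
The plan is to deduce this almost entirely from Proposition (\ref{solenoid-length-prop}) by viewing $G$ as a closed subgroup of $\mathds{G} = \prod_{n\in\N} G_n$. First I would observe that the projective limit $G$ is precisely the subset of $\mathds{G}$ cut out by the conditions $g_n = \rho_n(g_{n+1})$; since each $\rho_n$ is continuous, these are closed conditions, so $G$ is a closed subset, and it is a subgroup because the $\rho_n$ are group homomorphisms. Thus $G$ is a closed subgroup of the compact group $\mathds{G}$. Moreover, the projective (limit) topology on $G$ is by definition the initial topology for the projections $G \to G_n$, which coincides with the subspace topology inherited from the product topology on $\mathds{G}$.

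Next, since Proposition (\ref{solenoid-length-prop}) asserts that $\ell_\infty$ is continuous for, and metrizes, the product topology on $\mathds{G}$, its restriction to the subset $G$ is continuous for, and metrizes, the subspace topology on $G$ --- that is, the projective topology --- here using only that a metric inducing a given topology restricts to a metric inducing the subspace topology on any subset. This establishes the first assertion.

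For the Hausdorff estimate, I would note that $G_N = G \cap \mathds{G}^{(N)}$ is an intersection of closed subgroups of $\mathds{G}$, hence a closed subgroup of $G$, and that it contains the unit $1$ of $G$ (since $1 \in G$ and $1 \in \mathds{G}^{(N)}$). Because $\ell_\infty$ is used as the left-invariant (pseudo)metric $(g,h)\mapsto\ell_\infty(gh^{-1})$ on $\mathds{G}$, and $1 \in G_N$, we have $\ell_\infty(g) = \ell_\infty(g\cdot 1^{-1})$, the distance from $g$ to $1 \in G_N$, so $\ell_\infty(g) \leq \diam{G_N}{\ell_\infty}$ for every $g \in G_N$. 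Since $G_N \subseteq \mathds{G}^{(N)}$, we get $\diam{G_N}{\ell_\infty} \leq \diam{\mathds{G}^{(N)}}{\ell_\infty} \leq \frac{M}{N+1}$ by Expression (\ref{diam-solenoid-eq}). Finally, $\Haus{\ell_\infty}(G_N,\{1\})$ reduces to $\sup\{\ell_\infty(g) : g \in G_N\}$, since the reverse one-sided distance vanishes because $1 \in G_N$; hence $\Haus{\ell_\infty}(G_N,\{1\}) \leq \frac{M}{N+1}$.

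I do not expect a genuine obstacle here, as the real content lies in Proposition (\ref{solenoid-length-prop}); the only points requiring a line of care are the identification of the projective topology with the subspace topology from the product, and the bookkeeping that $1 \in G_N$ so that the Hausdorff distance collapses to the one-sided supremum controlled by $\diam{\mathds{G}^{(N)}}{\ell_\infty}$.
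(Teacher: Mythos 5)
Your proof is correct and follows the same route as the paper, which simply notes that everything is straightforward because $G$ is a closed subgroup of $\mathds{G}$; you have just made explicit the identification of the projective topology with the subspace topology and the reduction of the Hausdorff estimate to the diameter bound of Proposition (\ref{solenoid-length-prop}).
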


\begin{proof}
This is all straightforward as $G$ is a closed subgroup of $\mathds{G}$.
\end{proof}

We begin our study of quantum metrics on inductive limits with the observation that the proof of \cite[Theorem 3.83]{Latremoliere15b} includes the following fact, which will be of great use to us in view of Corollary (\ref{solenoid-length-cor}): 

\begin{lemma}\label{main-lemma}
Let $G$ be a compact metrizable group, $H\subseteq G$ be a normal closed subgroup, $\ell$ a continuous length function on $G$ and $\A$ a unital C*-algebra endowed with a strongly continuous ergodic action $\alpha$ of $G$. Let $K = \bigslant{G}{H}$ and let $\ell_K$ be the continuous length function $\ell_K : k \in K \mapsto \inf\{ \ell(g) : g \in kH \}$ where $kH$, for any $k\in K$, is the coset associated with $k$.

Let $\A_K = \{ a\in\A : \forall g \in H \quad \alpha^g(a) = a \}$ be the fixed point C*-subalgebra of $\A$ for the action $\alpha$ of $K$ on $\A$. Note that $\alpha$ induces an ergodic, strongly continuous action $\beta$ of $K$ on $\A_K$. Using Theorem (\ref{Rieffel-Lip-norm-thm}), Let $\Lip$ be the Lip-norm on $\A$ given by the action $\alpha$ of $G$ and the length function $\ell$, and let $\Lip_K$ be the Lip-norm on $\A_K$ given by the action $\beta$ of $K$ and the length function $\ell_K$. Then:
\begin{equation*}
\qpropinquity{}((\A,\Lip), (\A_K,\Lip_K)) \leq \diam{H}{\ell} \text{.}
\end{equation*}
\end{lemma}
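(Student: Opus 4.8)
The plan is to exhibit an explicit bridge from $(\A,\Lip)$ to $(\A_K,\Lip_K)$ whose length is at most $\diam{H}{\ell}$, and then invoke Property (4) of Theorem-Definition (\ref{def-thm}). Since $\A_K \subseteq \A$ is a unital C*-subalgebra, the natural candidate is the bridge $\gamma = (\A, \unit_\A, \mathrm{id}_\A, \iota)$, where $\D = \A$, the pivot is the unit $\unit_\A$ (so that $\StateSpace_1(\A|\unit_\A) = \StateSpace(\A)$), $\pi_\A = \mathrm{id}_\A$ is the identity, and $\pi_{\A_K} = \iota$ is the inclusion $\A_K \hookrightarrow \A$. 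With the pivot equal to the unit, the bridge seminorm simplifies to $\bridgenorm{\gamma}{a,b} = \|a - b\|_\A$ for $(a,b)\in\A\oplus\A_K$, and the $1$-level set computations are trivial.

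First I would dispose of the height. Because the pivot is $\unit_\A$, the set $\{\varphi\circ\mathrm{id}_\A : \varphi\in\StateSpace_1(\A|\unit_\A)\}$ is exactly $\StateSpace(\A)$, so the first Hausdorff term vanishes. For the second term, we must compare $\StateSpace(\A_K)$ with $\{\varphi\circ\iota : \varphi\in\StateSpace(\A)\}$; but by the Hahn-Banach theorem every state on the unital subalgebra $\A_K$ extends to a state on $\A$, so this restriction map is onto, and the second Hausdorff term vanishes as well. Hence $\bridgeheight{\gamma}{\Lip,\Lip_K} = 0$, and the length of $\gamma$ equals its reach.

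The main work, and the main obstacle, is the estimate on the reach, i.e. bounding
\[
\Haus{\|\cdot\|_\A}\left(\left\{a\in\sa{\A}:\Lip(a)\leq 1\right\},\ \left\{b\in\sa{\A_K}:\Lip_K(b)\leq 1\right\}\right)\leq \diam{H}{\ell}\text{.}
\]
This has two directions. For the easy direction, given $b\in\sa{\A_K}$ with $\Lip_K(b)\leq 1$, I claim $b$ itself lies in the first set up to the right error: in fact, since $\beta$ is the restriction of $\alpha$ and $\ell_K(kH)\leq\ell(g)$ for $g\in kH$, one checks directly from the definition of $\Lip_{\alpha,\ell}$ in Theorem-Definition (\ref{Rieffel-Lip-norm-thm}) that $\Lip(b)\leq\Lip_K(b)\leq 1$, so $b$ is already in the first ball and contributes distance $0$. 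The harder direction is: given $a\in\sa{\A}$ with $\Lip(a)\leq 1$, produce $b\in\sa{\A_K}$ with $\Lip_K(b)\leq 1$ and $\|a-b\|_\A\leq\diam{H}{\ell}$. The natural choice is the conditional expectation onto the fixed-point algebra, $b = \int_H \alpha^h(a)\dif h$ (Haar integral over the compact group $H$), which is self-adjoint and lies in $\A_K$. Then
\[
\|a - b\|_\A = \left\| \int_H \bigl(a - \alpha^h(a)\bigr)\dif h\right\|_\A \leq \sup_{h\in H}\|a - \alpha^h(a)\|_\A \leq \sup_{h\in H}\Lip(a)\,\ell(h) \leq \diam{H}{\ell}\text{,}
\]
using $\Lip(a)\leq 1$ and $\ell(h)\leq\diam{H}{\ell}$ for $h\in H$. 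It remains to verify $\Lip_K(b)\leq 1$: for $k\in K$ not the unit, pick $g\in kH$ with $\ell(g)$ near $\ell_K(k)$; since $b\in\A_K$ and $H$ is normal one has $\beta^k(b) = \alpha^g(b) = \int_H \alpha^{gh}(a)\dif h$, so $\|b - \beta^k(b)\|_\A \leq \sup_{h\in H}\|\alpha^h(a) - \alpha^{gh}(a)\|_\A = \sup_{h\in H}\|a - \alpha^{h^{-1}gh}(a)\|_\A$; here $h^{-1}gh\in kH$ as $H$ is normal, but bounding its length by $\ell(g)$ requires care — the cleanest route is instead to use that $b = \CondExp{a}{\A_K}$ satisfies $\alpha^g(b)=\beta^{gH}(b)$ and to estimate $\|b-\beta^k(b)\|_\A = \|\int_H(\alpha^h(a)-\alpha^{gh}(a))\dif h\|_\A \le \sup_{h}\|a - \alpha^{gh h^{-1}}(\cdots)\|$, which after the change of variable shows $\Lip_K(b)\le\Lip(a)\le 1$ provided one takes the infimum-over-the-coset definition of $\ell_K$ into account. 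I expect this last verification — that the conditional expectation is a contraction from $(\sa{\A},\Lip)$ to $(\sa{\A_K},\Lip_K)$ — to be the genuinely delicate point, and it is presumably exactly the computation extracted from the proof of \cite[Theorem 3.83]{Latremoliere15b} that the lemma statement refers to. Granting it, the reach is at most $\diam{H}{\ell}$, hence so is $\bridgelength{\gamma}{\Lip,\Lip_K}$, and Property (4) of Theorem-Definition (\ref{def-thm}) yields $\qpropinquity{}((\A,\Lip),(\A_K,\Lip_K)) \leq \diam{H}{\ell}$.
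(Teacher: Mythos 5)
Your overall route is exactly the paper's: the bridge $(\A,\unit_\A,\cdot,\cdot)$ with unit pivot, the vanishing height via Hahn--Banach extension of states, the inequality $\Lip\leq\Lip_K$ on $\sa{\A_K}$ (the paper even gets equality, but the inequality suffices), and the norm estimate $\|a-\mathds{E}(a)\|_\A\leq\diam{H}{\ell}$ for the Haar average $\mathds{E}(a)=\int_H\alpha^h(a)\dif\mu(h)$. However, the one genuinely nontrivial step of the lemma --- that $\Lip_K(\mathds{E}(a))\leq\Lip(a)$, i.e.\ that the conditional expectation is Lip-contractive --- is precisely the step you do not close. Your first attempt, bounding $\|\mathds{E}(a)-\alpha^g(\mathds{E}(a))\|_\A$ by $\sup_{h\in H}\|a-\alpha^{h^{-1}gh}(a)\|_\A$, cannot be finished from $\Lip(a)\leq 1$, since $\ell(h^{-1}gh)$ is in general not controlled by $\ell(g)$ (as you notice); your second attempt is left at the level of ``after the change of variable'' and the conclusion is taken on faith (``granting it''). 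As written, this is a gap, not a proof.

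The gap closes the way the paper does it, and the point is to estimate \emph{inside} the Haar integral rather than by a supremum, using normality through invariance of Haar measure: since $H$ is normal, $h\in H\mapsto g^{-1}hg$ is a continuous automorphism of the compact group $H$ and therefore preserves the Haar probability measure $\mu$. Hence $\alpha^g(\mathds{E}(a))=\int_H\alpha^{gh}(a)\dif\mu(h)=\int_H\alpha^{hg}(a)\dif\mu(h)$, so for every $g\in G$:
\begin{equation*}
\left\|\mathds{E}(a)-\alpha^g(\mathds{E}(a))\right\|_\A=\left\|\int_H\alpha^h\left(a-\alpha^g(a)\right)\dif\mu(h)\right\|_\A\leq\left\|a-\alpha^g(a)\right\|_\A\leq\ell(g)\Lip(a)\text{.}
\end{equation*}
Since $\beta^k(\mathds{E}(a))=\alpha^g(\mathds{E}(a))$ for every $g$ in the coset $kH$, taking the infimum over $g\in kH$ gives $\|\mathds{E}(a)-\beta^k(\mathds{E}(a))\|_\A\leq\ell_K(k)\Lip(a)$, i.e.\ $\Lip_K(\mathds{E}(a))\leq\Lip(a)$; this is exactly the computation in the paper, there phrased as rewriting $\int_{gH}$ as $\int_{Hg}$. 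With this verification supplied, your argument coincides with the paper's and the conclusion $\qpropinquity{}((\A,\Lip),(\A_K,\Lip_K))\leq\diam{H}{\ell}$ follows as you state.
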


\begin{proof}
We first note that since $H$ is closed, $\ell_K$ is easily checked to be a length function on $K$. Moreover, if $\pi : G\twoheadrightarrow K$ is the canonical surjection, then the trivial inequality $\ell_K(\pi(g)) \leq \ell(g)$ for all $g\in G$ proves that $\ell_K$ is continuous on $K$ since $g \in G \mapsto \ell_K(\pi(g))$ is $1$-Lipschitz, by characterization of continuity for the final topology on $K$.

Let $\mu$ be the Haar probability measure on $H$. For all $a\in\A$, we define:
\begin{equation*}
\mathds{E}(a) = \int_H \alpha^g(a) \,d\mu(g) \text{.}
\end{equation*}
A standard argument shows that $\mathds{E}$ is a unital conditional expectation on $\A$ with range $\A_K$. In particular, $\mathds{E}$ maps $\sa{\A}$ onto $\sa{\A_K}$.

Moreover, we note that since $H$ is normal, we have $gH = Hg$ for all $g\in 
G$, and thus:
\begin{equation*}
\begin{split}
\Lip_K(\mathds{E}(a)) &= \sup\left\{\frac{\left\|\alpha^g\left(\int_H \alpha^h(a)\,d\mu(h)\right) - \int_H \alpha^h(a)\,d\mu(h)\right\|_\A}{\ell(g)} : g\in G\setminus\{1\} \right\}\\
&= \sup\left\{\frac{\left\|\int_{gH} \alpha^{h}(a)\,d\mu(h) - \int_H \alpha^h(a)\,d\mu(h)\right\|_\A}{\ell(g)} : g\in G\setminus\{1\} \right\}\\
&= \sup\left\{\frac{\left\| \int_{Hg} \alpha^{h}(a)\,d\mu(h)-\int_H \alpha^{h}(a)\,d\mu(h)\right\|_\A}{\ell(g)} : g \in G\setminus\{1\} \right\}\\
&\leq \sup\left\{\frac{\int_{H} \left\|\alpha^{hg}(a) - \alpha^h(a) \,d\mu(h)\right\|_\A}{\ell(g)} : g \in G\setminus\{1\} \right\}\\
&= \sup\left\{\frac{\left\|a-\alpha^{g}(a)\right\|_\A}{\ell(g)} : g \in G\setminus\{1\} \right\} = \Lip(a) \text{.}
\end{split}
\end{equation*}
Hence $\mathds{E}$ is a weak contraction from $(\A,\Lip)$ onto $(\A_K,\Lip_K)$. 

Let now $\mathrm{id}$ be the identity operator on $\A$ and $\vartheta : \A_K \hookrightarrow\A$ be the canonical inclusion map. We thus define a bridge $\gamma = (\A,\unit_\A,\vartheta,\mathrm{id})$ from $\A_K$ to $\A$, whose height is null since its pivot is $\unit_\A$. We are thus left to compute the reach of $\gamma$.

To begin with, if $a\in\sa{\A_K}$ with $\Lip_K(a) \leq 1$, then an immediate computation proves that $\Lip(a) = \Lip_K(a) \leq 1$ and thus $\|a\unit_\A - \unit_\A a \|_\A = 0$.

Now let $a\in\sa{\A}$ with $\Lip(a) \leq 1$. Then $\Lip_K(\mathds{E}(a))\leq 1$, and we have:
\begin{equation*}
\begin{split}
\left\|a - \mathds{E}(a)\right\|_\A &= \left\|\int_H \alpha^h(a) - a \, d\mu(h) \right\|_\A \text{since $\mu$ probability measure,}\\
&\leq \int_H \|\alpha^h(a)-a\|_\A \, d\mu(h)\\
&\leq \int_H \ell(h) \Lip(a) \, d\mu(h) \\
&\leq \int_H \diam{H}{\ell} \,d\mu(h) = \diam{H}{\ell} \text{.}
\end{split}
\end{equation*}
Thus, the reach, and hence the length of $\gamma$ is no more than $\diam{H}{\ell}$, which, by Theorem-Definition (\ref{def-thm}), concludes our proof for our lemma.
\end{proof}

We are now in a position to prove one of the main results of this paper.

\begin{theorem}\label{main-thm}
Let $\xymatrix{G_0 & \ar@{->>}[l]_{\rho_0} G_1 & \ar@{->>}[l]_{\rho_1} G_2 & \ar@{->>}[l]_{\rho_2} \cdots} = (G_n,\rho_n)_{n\in\N}$ be a projective sequence of compact metrizable groups, and for each $n\in\N$, let $\ell_n$ be a continuous length function on $G_n$. Let $\A$ be a unital C*-algebra endowed with a strongly continuous action $\alpha$ of $G = \varprojlim (G_n,\rho_n)_{n\in\N}$. Let $\varrho_n : G \twoheadrightarrow G_n$ be the canonical surjection for all $n\in\N$.

We endow $G$ with the continuous length function $\ell_\infty$ from Definition (\ref{solenoid-length-def}) for some $M\geq \diam{G_0}{\ell_0}$.

For all $N \in \N$, let:
\begin{equation*}
G^{(N)} = \ker\varrho_N = \left\{ (g_n)_{n\in\N} \in G : \forall n \in \{0,\ldots,N-1\} \quad g_n = 1 \right\} \trianglelefteq G
\end{equation*}
and let $\A_N$ be the fixed point C*-subalgebra of $\alpha$ restricted to $G^{(N)}$. We denote by $\alpha_n$ the action of $G_n$ induced by $\alpha$ on $\A_n$ for all $n\in\N$.

Moreover, for all $n\in \N$ and $g \in G_n$ we set:
\begin{equation*}
\ell_\infty^n(g) = \inf\left\{ \ell_\infty(h) : \varrho_n(h) = g \right\}\text{.}
\end{equation*}

If, for some $n\in\N$, the action of $G_n$ induced by $\alpha$ on $\A_n$ is ergodic, then:
\begin{enumerate}
\item $\alpha$ is ergodic on $\A$ and $\alpha_n$ is ergodic on $\A_n$ for all $n\in\N$
\item If $\Lip$ is the Lip-norm induced by $\alpha$ and $\ell_\infty$ on $\A$ and $\Lip_n$ is the Lip-norm induced by $\alpha_n$ and $\ell_\infty^n$ on $\A_n$ using Theorem (\ref{Rieffel-Lip-norm-thm}), then for all $n\in\N$:
\begin{equation*}
\qpropinquity{}\left((\A,\Lip),(\A_n,\Lip_n)\right) \leq \frac{M}{n+1}
\end{equation*}
and thus:
$\lim_{n\rightarrow\infty} \qpropinquity{}\left((\A,\Lip),(\A_n,\Lip_n)\right) = 0\text{.}$
\end{enumerate}
\end{theorem}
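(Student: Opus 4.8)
The plan is to reduce Theorem (\ref{main-thm}) to an iterated application of Lemma (\ref{main-lemma}), using Corollary (\ref{solenoid-length-cor}) to control the diameters of the tail subgroups $G^{(N)}$. The first task is the ergodicity bookkeeping in part (1): if $\alpha_n$ is ergodic on $\A_n$ for some fixed $n$, I would show $\alpha$ is ergodic on $\A$ and every $\alpha_m$ is ergodic on $\A_m$. For $\alpha$ ergodic: any $\alpha$-fixed element lies in every $\A_m$ (since $G^{(m)}\subseteq G$), in particular in $\A_n$, and is fixed by $G_n$ under $\alpha_n$, hence scalar. For the other $\A_m$'s one has to split into $m \leq n$ and $m > n$; the key structural fact is that $\A_n$ is itself the fixed-point algebra of the action of $G^{(m)}/G^{(n)}$ (for $m\geq n$) or that $\A_m \supseteq \A_n$ with $\A_n$ the fixed points of a residual group action, so ergodicity propagates downward and upward along the tower. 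I would make this precise by noting $G^{(N)} = \ker\varrho_N$ is decreasing in $N$, so $\A_N$ is increasing in $N$, and $\A_n \subseteq \A_m$ for $m \geq n$ realizes $\A_n$ as a fixed-point subalgebra of $\A_m$ for the (compact, metrizable) group $G^{(n)}/G^{(m)}$ acting via $\alpha$; ergodicity of this action combined with ergodicity of $\alpha_n$ on $\A_n$ gives ergodicity of $\alpha_m$ on $\A_m$ by the standard fixed-point argument, and similarly for $m < n$ by viewing $\A_m$ as fixed points in $\A_n$.

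For part (2), I would apply Lemma (\ref{main-lemma}) with $G$ replaced by $G$ itself, $H := G^{(n)}$ (which is normal and closed, being the kernel of the continuous homomorphism $\varrho_n$), and $\A$ with its action $\alpha$ and length function $\ell = \ell_\infty$. The quotient $K = G/G^{(n)}$ is canonically identified with $G_n$ via $\varrho_n$, and under this identification the quotient length function $\ell_K(k) = \inf\{\ell_\infty(h) : h \in kH\}$ becomes exactly $\ell_\infty^n$ as defined in the statement. The fixed-point subalgebra $\A_K$ of $H = G^{(n)}$ is by definition $\A_n$, and the induced action $\beta$ of $K \cong G_n$ is $\alpha_n$; Lemma (\ref{main-lemma}) requires $\alpha$ ergodic on $\A$, which we have from part (1). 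Lemma (\ref{main-lemma}) then yields
\begin{equation*}
\qpropinquity{}\left((\A,\Lip),(\A_n,\Lip_n)\right) \leq \diam{G^{(n)}}{\ell_\infty}\text{.}
\end{equation*}
Finally, Corollary (\ref{solenoid-length-cor}) (or directly Proposition (\ref{solenoid-length-prop}), Expression (\ref{diam-solenoid-eq})) gives $\diam{G^{(n)}}{\ell_\infty} \leq \frac{M}{n+1}$, since $G^{(n)} \subseteq \mathds{G}^{(n-1)}$ in the notation of Hypothesis (\ref{group-hyp}) — one should double-check the index shift: $G^{(n)} = \{(g_m) : g_0 = \cdots = g_{n-1} = 1\}$ corresponds to vanishing of coordinates $0$ through $n-1$, i.e. to $\mathds{G}^{(n-1)}$, whose diameter is $\leq \frac{M}{n}$; if instead one wants the bound $\frac{M}{n+1}$ one uses that the projective condition forces $g_n$ to also be constrained, or simply accepts the (still-sufficient) bound and adjusts constants. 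Either way the bound tends to $0$, giving the limit assertion.

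The main obstacle I expect is the ergodicity propagation in part (1) — specifically, verifying cleanly that $\A_m$ is a fixed-point algebra for a genuine compact group action (not just a subalgebra) so that the "fixed points of ergodic restricted to fixed points" argument applies in both directions along the tower. Concretely, one needs that $G^{(n)}/G^{(m)}$ is compact metrizable (clear, as a quotient of the compact metrizable $G^{(n)}$ by the closed subgroup $G^{(m)}$) and that its action on $\A_m$ is well-defined and strongly continuous with fixed-point algebra exactly $\A_n$ — the last point is where one must use that $\A_n = \A^{G^{(n)}} = (\A^{G^{(m)}})^{G^{(n)}/G^{(m)}}$, which follows from the tower of conditional expectations $\mathds{E}_{G^{(n)}} = \mathds{E}_{G^{(n)}/G^{(m)}}\circ\mathds{E}_{G^{(m)}}$ built as in the proof of Lemma (\ref{main-lemma}). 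Once this is in hand the rest is a direct substitution into the cited results, with only the bookkeeping of which group plays the role of $H$ and the index shift in the diameter estimate requiring care.
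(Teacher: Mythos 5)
Your route is the paper's: identify $G_n$ with $\bigslant{G}{G^{(n)}}$, apply Lemma (\ref{main-lemma}) with $H=G^{(n)}$, $\ell=\ell_\infty$ (so the quotient length is $\ell_\infty^n$, the fixed-point algebra is $\A_n$, and the induced action is $\alpha_n$), and bound $\diam{G^{(n)}}{\ell_\infty}$ by Corollary (\ref{solenoid-length-cor}). Two points need correcting, though neither destroys the argument. First, in part (1) you make the ergodicity propagation harder than it is, and the step you lean on is misstated: the action of $G^{(n)}/G^{(m)}$ on $\A_m$ is \emph{not} ergodic in general --- its fixed-point algebra is $\A_n$, not $\C\unit_\A$ --- and its ergodicity is neither true nor needed. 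The paper's argument is a one-liner: for every $m$, an element of $\A_m$ is fixed by $\alpha_m$ if and only if it is fixed by all of $G$ under $\alpha$ (it is already $G^{(m)}$-invariant by membership in $\A_m$). So once $\alpha$ is ergodic on $\A$ --- which you prove exactly as the paper does: a $G$-fixed element lies in $\A_n$ and is $\alpha_n$-fixed, hence scalar --- ergodicity of every $\alpha_m$ follows immediately from $\A_m\subseteq\A$, with no tower of quotient actions or composed conditional expectations. The ``main obstacle'' you flag is therefore not an obstacle at all.

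Second, your index-shift discussion has the propagation backwards. In a projective limit the relations read $g_j=\rho_j(g_{j+1})$, so triviality propagates \emph{downward}: $g_N=1$ forces $g_j=1$ for all $j\leq N$, whereas $g_0=\cdots=g_{N-1}=1$ only forces $g_N\in\ker\rho_{N-1}$, which is nontrivial in general (for the solenoid, $g_N$ can be any $p$-th root of unity). Hence your claim that ``the projective condition forces $g_n$ to also be constrained'' does not rescue the constant; the set you describe is $\ker\varrho_{N-1}$, with the weaker bound $\frac{M}{N}$. The correct reading is that the operative definition in the theorem is $G^{(N)}=\ker\varrho_N$, i.e. $g_N=1$, which equals $G\cap\mathds{G}^{(N)}$ in the notation of Hypothesis (\ref{group-hyp}); Corollary (\ref{solenoid-length-cor}) then gives exactly $\diam{G^{(N)}}{\ell_\infty}\leq\frac{M}{N+1}$, and it is the enumerated description $\{0,\ldots,N-1\}$ in the statement that is the off-by-one misprint. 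Your fallback (accepting $\frac{M}{N}$) does give the limit assertion, but not the stated inequality. With these two repairs your proposal coincides with the paper's proof.
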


\begin{proof}
For any given $n\in\N$, the group $G_n$ is isomorphic to $\bigslant{G}{G^{(n)}}$ and we are in the setting of Lemma (\ref{main-lemma}) --- in particular, $\ell_\infty^n$ is a continuous length function on $G_n$ and $\alpha_n$ is a well-defined action.
 
We note that by construction, for all $n\in\N$:
\begin{equation}\label{triv-eq}
\left\{a\in\A_n :\forall g \in G_n\quad \alpha_n^g(a) = a\right\} = \left\{a\in\A_n : \forall g \in G \quad \alpha^g(a) = a \right\}\text{.}
\end{equation}

Let us now assume that the action $\alpha_n$ is ergodic for some $n\in\N$. Let $a\in\A$ such that for all $g \in G$ we have $\alpha^g(a) = a$. Then $a\in\A_n$ in particular, since $a$ is invariant by the action of $\alpha$ restricted to $G^{(n)}$. Moreover, $a$ is invariant by the action $\alpha_n$ by Expression (\ref{triv-eq}) and thus $a\in\C\unit_\A$. Thus $\alpha$ is ergodic. This, in turn, proves that for all $n\in\N$, the action $\alpha_n$ is ergodic by Expression (\ref{triv-eq}).

Thus, $\Lip$ and $\Lip_n$ are now well-defined. By Lemma (\ref{main-lemma}) and Corollary (\ref{solenoid-length-cor}), we obtain:
\begin{equation*}
\qpropinquity{}((\A,\Lip),(\A_n,\Lip_n)) \leq \frac{M}{n+1} \text{.}
\end{equation*}
This concludes our proof.
\end{proof}

Theorem (\ref{main-thm}) involves an ergodic action of a projective limit of compact groups on a unital C*-algebra and one may wonder when such actions exist. The following theorem proves that one may obtain such actions on inductive limits, under reasonable compatibility conditions. Thus the next theorem provides us with a mean to construct Leibniz Lip-norms on inductive limits of certain {\Lqcms s}.

\begin{theorem}\label{ergodic-thm}
Let $\xymatrix{G_0 & \ar@{->>}[l]_{\rho_0} G_1 & \ar@{->>}[l]_{\rho_1} G_2 & \ar@{->>}[l]_{\rho_2} \cdots} = (G_n,\rho_n)_{n\in\N}$ be a projective sequence of compact groups. Let:
\begin{equation*}
G = \left\{ (g_n)_{n\in\N} \in \prod_{n\in\N} G_n : \forall n\in\N \quad \rho_n(g_{n+1}) = g_{n} \right\}\text{,}
\end{equation*} 
noting that $G = \varprojlim (G_n,\rho_n)_{n\in\N}$.

Let $\xymatrix{\A_0 \ar@{^{(}->}[r]^{\varphi_0} & \A_1 \ar@{^{(}->}[r]^{\varphi_1} & \A_2 \ar@{^{(}->}[r]^{\varphi_2} & \cdots} = (\A_n,\varphi_n)_{n\in\N}$ be an inductive sequence of unital C*-algebras where, for all $n\in\N$, we assume:
\begin{enumerate}
\item $\varphi_n$ is a *-monomorphism,
\item there exists an ergodic action $\alpha_n$ of $G_n$ on $\A_n$,
\item for all $g = (g_n)_{n\in\N} \in G$ we have:
\begin{equation}\label{compatibility-eq}
\varphi_{n}\circ\alpha_n^{g_n} = \alpha_{n+1}^{g_{n+1}}\circ\varphi_n\text{.}
\end{equation}
\end{enumerate}
We denote by $\A$ the inductive limit of $(\A_n,\varphi_n)_{n\in\N}$.

Then there exists an ergodic strongly continuous action $\alpha$ of $G = \varprojlim(G_n,\rho_n)_{n\in\N}$ on $\A$. 
\end{theorem}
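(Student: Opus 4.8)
The plan is to build $\alpha$ on the dense subalgebra $\bigcup_{n\in\N}\A_n$ of $\A$ — identifying each $\A_n$ with its image under the canonical embedding into the inductive limit $\A$ — and then extend by continuity. Let $\varrho_n : G \twoheadrightarrow G_n$ denote the canonical projection, which is onto since every $\rho_n$ is an epimorphism; note also that for any $h\in G_n$ there is $g=(g_k)_{k\in\N}\in G$ with $g_n = h$, so the compatibility condition (\ref{compatibility-eq}) says precisely that $\varphi_n\circ\alpha_n^{\rho_n(h)} = \alpha_{n+1}^{h}\circ\varphi_n$ for every $h\in G_{n+1}$. Fixing $g=(g_n)_{n\in\N}\in G$, this shows that the $*$-automorphisms $\alpha_n^{g_n}$ of $\A_n$ are compatible with the connecting maps $\varphi_n$, hence assemble into a single $*$-automorphism of $\bigcup_n\A_n$; since each $\alpha_n^{g_n}$ is isometric, it extends to an isometric unital $*$-endomorphism $\alpha^g$ of $\A$, and the extension of $\alpha^{g^{-1}}$ is its inverse, so $\alpha^g\in\mathrm{Aut}(\A)$. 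The identities $\alpha^{gh}=\alpha^g\circ\alpha^h$ and $\alpha^{1}=\mathrm{id}_\A$ hold on the dense subalgebra $\bigcup_n\A_n$ because each $\alpha_n$ is an action of $G_n$, hence hold on all of $\A$; thus $\alpha : g\mapsto\alpha^g$ is an action of $G$ by $*$-automorphisms on $\A$.

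Next I would verify strong continuity. For $a\in\A_n$ and $g\in G$ we have $\alpha^g(a)=\alpha_n^{\varrho_n(g)}(a)$, so $g\mapsto\alpha^g(a)$ is continuous, being the composition of the continuous map $\varrho_n$ with the strongly continuous action $\alpha_n$. For arbitrary $a\in\A$, fix $g_0\in G$ and $\varepsilon>0$, choose $n$ and $b\in\A_n$ with $\|a-b\|_\A<\tfrac{\varepsilon}{3}$, and a neighborhood $U$ of $g_0$ with $\|\alpha^g(b)-\alpha^{g_0}(b)\|_\A<\tfrac{\varepsilon}{3}$ for all $g\in U$; since every $\alpha^g$ is isometric, $\|\alpha^g(a)-\alpha^{g_0}(a)\|_\A<\varepsilon$ for $g\in U$. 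Hence $g\mapsto\alpha^g(a)$ is continuous for every $a\in\A$, so $\alpha$ is strongly continuous.

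The crux is ergodicity. Let $\Exp : \A\to\A$ be the canonical conditional expectation onto the fixed-point algebra $\A^G=\{a\in\A : \forall g\in G\ \alpha^g(a)=a\}$, defined by $\Exp(a)=\int_G\alpha^g(a)\,dg$ against the normalized Haar measure of $G$; it is unital, positive, of norm one, and $\Exp(\A)=\A^G$. For $a\in\A_n$ the integrand depends on $g$ only through $\varrho_n(g)$, and since $\varrho_n : G\twoheadrightarrow G_n$ is a continuous surjective homomorphism of compact groups it pushes the Haar measure of $G$ forward to the Haar measure of $G_n$; therefore $\Exp(a)=\int_{G_n}\alpha_n^h(a)\,dh$, which is the canonical projection of $a$ onto the fixed-point algebra of $\alpha_n$. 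As $\alpha_n$ is ergodic this fixed-point algebra is $\C\unit_{\A_n}=\C\unit_\A$, so $\Exp(a)\in\C\unit_\A$. Thus $\Exp$ carries the dense subalgebra $\bigcup_n\A_n$ into the closed subspace $\C\unit_\A$, whence $\Exp(\A)\subseteq\C\unit_\A$ by continuity of $\Exp$; combined with the trivial reverse inclusion this gives $\A^G=\Exp(\A)=\C\unit_\A$, i.e.\ $\alpha$ is ergodic.

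The only genuinely delicate points — and where the hypotheses are actually used — are the gluing of the $\alpha_n^{g_n}$ into one automorphism via (\ref{compatibility-eq}), and the reduction of the average over $G$ to the average over $G_n$ on each $\A_n$, which rests on $\varrho_n$ transporting Haar measure to Haar measure; everything else is the routine bookkeeping of transferring an action to an inductive limit.
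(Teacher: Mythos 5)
Your proof is correct, and its skeleton matches the paper's: define $\alpha^g$ on the canonical dense copy of $\bigcup_n \A_n$ by gluing the automorphisms $\alpha_n^{g_n}$ via the compatibility relation, extend by continuity using that each $\alpha^g$ is isometric, obtain strong continuity from the finite stages by a $3\varepsilon$ argument, and treat ergodicity through the Haar average $\mathds{E}(a)=\int_G\alpha^g(a)\,d\mu(g)$. Where you genuinely diverge is the ergodicity step: the paper works in a concrete model of $\varinjlim(\A_n,\varphi_n)$ built from sequences in a $\limsup$-algebra, first proves that an invariant element of the algebraic inductive limit is scalar via an ``equal predictable tails'' observation about sequence representatives, and only then extends to all of $\A$ using $\mathds{E}$ and approximation; you instead note that for $a$ in (the image of) $\A_n$ the integrand factors through the canonical surjection $\varrho_n:G\twoheadrightarrow G_n$, and since $\varrho_n$ pushes the Haar measure of $G$ to that of $G_n$, the average $\mathds{E}(a)$ equals the $G_n$-average of $a$, hence is scalar by ergodicity of $\alpha_n$; density and continuity of $\mathds{E}$ then give $\mathds{E}(\A)\subseteq\C\unit_\A$, and ergodicity follows since any invariant $a$ satisfies $a=\mathds{E}(a)$. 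This is arguably cleaner: it bypasses the representative-tails lemma and makes explicit the measure-theoretic fact that the paper leaves implicit when it asserts that $\mathds{E}(a_\varepsilon)$ is scalar for $a_\varepsilon$ in the algebraic limit. Two points you share with the paper rather than gaps of your own: both arguments tacitly use that each $\alpha_n$ is strongly continuous (needed for your continuity of $g\mapsto\alpha_n^{\varrho_n(g)}(a)$, and for the paper's choice of the neighborhood $V$), and both use surjectivity of the projections $\varrho_n$, which indeed holds for projective limits of compact groups with surjective bonding maps.
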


\begin{proof}
For all $(a_n)_{n\in\N} \in \prod_{n\in\N}\A_n$, we set $\|(a_n)\|_\infty = \limsup_{n\rightarrow\infty} \|a_n\|_{\A_n}$, which defined a C*-seminorm on $\prod_{n\in\N}\A_n$. The quotient of $\prod_{n\in\N}\A_n$ by $\{a\in\prod_{n\in\N}\A_n:\|a\|_\infty = 0\}$, endowed with the quotient seminorm of $\|\cdot\|_\infty$, which we still denote by $\|\cdot\|_\infty$, is a C*-algebra, which we denote by $\limsup_{n\rightarrow\infty}\A_n$. Let $\pi$ be the canonical surjection from $\prod_{n\in\N}\A_n$ onto $\limsup_{n\rightarrow\infty}\A_n$. 

Up to a *-isomorphism, $\A = \varinjlim(\A_n,\varphi_n)$ is the completion of the image by $\pi$ of the set:
\begin{equation*}
\A_\infty = \left\{ (a_n)_{n\in\N} : \exists N\in\N \quad \forall n > N \quad a_n = \varphi_{n-1}\circ\ldots\circ\varphi_N(a_N) \right\}\text{,}
\end{equation*}
in $\limsup_{n\rightarrow\infty} \A_n$. 

We begin with a useful observation. Let $a = (a_n)_{n\in\N}$ and $b = (b_n)_{n\in\N}$ in $\A_\infty$ with $\|a-b\|_\infty = 0$. Let $N\in \N$ such that, for all $n\geq N$, we have $a_{n+1} = \varphi_n(a_n)$ and $b_{n+1} = \varphi_n(b_n)$: note that by definition, such a number $N$ exists. If $\|a_N - b_N\|_\A > \varepsilon$ for some $\varepsilon > 0$, then since $\varphi_n$ is a *-monomorphism for all $n\in\N$, it is an isometry, and thus $\limsup_{n\rightarrow\infty} \|a_n-b_n\|_{\A_n} \geq \varepsilon$, which is a contradiction. Hence, for all $n\geq N$ we have $\|a_n - b_n\|_{\A_n} = 0$. Informally, if two sequences in $\A_\infty$ describe the same element of $\A$, then their predictable tails are in fact equal.

We now define the action of $G$ on $\A$. For $g = (g_n)_{n\in\N} \in G$ and $(a_n)_{n\in\N} \in \A_\infty$, we set $\alpha^g((a_n)_{n\in\N}) = \left(\alpha^{g_n}(a_n)\right)_{n\in\N}$, which is a *-morphism of norm $1$. Condition (\ref{compatibility-eq}) ensures that $\alpha^g$ maps $\A_\infty$ to itself. It induces an action of $G$ on $\pi(\A)$ by norm $1$ $\ast$-automorphisms in the obvious manner, and thus extends to $\A$ by continuity (we use the same notation for this extension). It is easy to check that $\alpha$ is an action of $G$ on $\A$.

Let $a \in \pi(\A_\infty)$ such that $\alpha^g(a) = a$ for all $g\in G$. Let $(a_n)_{n\in\N}\in\A_\infty$ with $\pi((a_n)_{n\in\N}) = a$. Let $N\in\N$ such that for all $n\geq N$, we have $a_{n+1} = \varphi_n(a_n)$. By definition of the action $\alpha$, we have for all $g = (g_n)_{n\in\N} \in G$ that $\alpha^g(a) = (\alpha_n^{g_n}(a_n))_{n\in\N}$, and we note that:
\begin{equation*}
\alpha_{n+1}^{g_{n+1}}\left(a_{n+1}\right) = \alpha_{n+1}^{g_{n+1}}\left(\varphi_n(a_n) \right) = \varphi_n\left(\alpha_n^{g_n}(a)\right) \text{,}
\end{equation*}
by Condition (\ref{compatibility-eq}). Thus by our earlier observation, we conclude that $\alpha_N^{g_N}(a_N) = a_N$ for all $g\in G$. Thus, as $\rho_N$ is surjective, and $\alpha_N$ is ergodic, we conclude that $a_N = \lambda \unit_{\A_N}$. Thus for all $n\geq N$ we have $a_n = \varphi_{n-1}\circ\cdots\circ\varphi_N(\lambda\unit_{\A_N})$. Consequently, $a \in \C\unit_\A$ by definition.

Now, let $\mu$ be the Haar probability measure on $G$ and define $\mathds{E}(a) = \int_G \alpha^g(a)\,d\mu(g)$ for all $a\in\A$. It is straightforward to check that $\mathds{E}(a)$ is invariant by $\alpha$ for all $a\in\A$. 

Let $a\in\A$ such that $\alpha^g(a) = a$ for all $g\in G$. Thus $\mathds{E}(a) = a$. Let $\varepsilon > 0$. There exists $a_\varepsilon \in \A_\infty$ such that $\|a-a_\varepsilon\|_\A \leq \frac{\varepsilon}{2}$. Now:
\begin{equation*}
\|\mathds{E}(a) - \mathds{E}(a_\varepsilon)\|_\A = \|\mathds{E}(a-a_\varepsilon)\|_\A \leq \|a-a_\varepsilon\|_\A\leq\frac{\varepsilon}{2}\text{.}
\end{equation*}
and yet $\mathds{E}(a_\varepsilon) \in \C\unit_\A$ since $G$ is ergodic on $\A_\infty$. Thus, as $\varepsilon > 0$ is arbitrary, $\mathds{E}(a)$ lies in the closure of $\C\unit_\A$, i.e. in $\C\unit_\A$, and thus $\alpha$ is ergodic.

Finally, again let $a\in\A$ and $\varepsilon > 0$, and let $a_\varepsilon \in \pi(\A_\infty)$ such that $\|a-a_\varepsilon\|_\A \leq \frac{\varepsilon}{3}$. Let $(a_n)_{n\in\N} \in \A_\infty$ such that $\pi((a_n)_{n\in\N}) = a_\varepsilon$. There exists $N\in\N$ such that $\varphi_{n}(a_n) = a_{n+1}$ for all $n\geq N$. Since $\alpha_N$ is strongly continuous, there exists a neighborhood $V$ of $1\in G_N$ such that $\|\alpha_N^g(a_N) - a_N\|_{\A_N} < \frac{\varepsilon}{3}$ for all $g \in V$. Let $W = \rho_N^{-1}(V)$ which is an open neighborhood of $1\in G$. Then since $\varphi_n$ is an isometry for all $n\in\N$, we have for all $g = (g_n)_{\in\N} \in W$:
\begin{equation*}
\|\alpha_n^{g_n}(a_n) - a_n\|_{\A_n} = \|\alpha_N^{g_N}(a_N) - a_N\|_{\A_N} \leq \frac{\varepsilon}{3}\text{.}
\end{equation*}
Thus for all $g\in W$ we have:
\begin{equation*}
\|a-\alpha^g(a)\|_\A \leq \|a-a_\varepsilon\|_\A + \|a_\varepsilon - \alpha^g(a_\varepsilon)\|_\A + \|\alpha^g(a_\varepsilon - a)\| \leq \varepsilon\text{.}
\end{equation*}
Thus $\alpha$ is strongly continuous.
\end{proof}

Thus, Theorem (\ref{ergodic-thm}) can provide ergodic, strongly continuous actions on certain inductive limits, which then fit Theorem (\ref{main-thm}) and provide us with convergence of certain {\Lqcms s} to inductive limit C*-algebras:
\begin{corollary}
We assume the same assumptions as Theorem (\ref{ergodic-thm}). Moreover, for each $n\in\N$, let $\ell_n$ be a continuous length function on $G_n$. Let $\ell_\infty$ and, for all $n\in\N$, let $\ell_\infty^n$ be given as in Theorem (\ref{main-thm}), for some $M \geq \diam{G_0}{\ell_0}$.

We denote by $\A$ the inductive limit of $(\A_n,\varphi_n)_{n\in\N}$.

Let $\alpha$ be the action of $G$ on $\A$ constructed in Theorem (\ref{ergodic-thm}). For all $n\in\N$, let $\B_n$ is the fixed point C*-subalgebra of the restriction of $\alpha$ to $\ker \rho_n$, let $\Lip_n$ be the Lip-norm defined from the restriction of $\alpha$ to $G_n$ on $\B_n$ using the length function $\ell_\infty^n$. If $\Lip$ is the Lip-norm on $\A$ induced by $\alpha$ and $\ell_\infty$ via Theorem (\ref{Rieffel-Lip-norm-thm}) then:
\begin{equation*}
\lim_{n\rightarrow\infty} \qpropinquity{}((\A,\Lip),(\B_n,\Lip_n)) = 0\text{.}
\end{equation*}
\end{corollary}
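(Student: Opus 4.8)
The plan is to read the result off directly from Theorem (\ref{ergodic-thm}) and Theorem (\ref{main-thm}): the former supplies a strongly continuous \emph{ergodic} action of $G = \varprojlim(G_n,\rho_n)_{n\in\N}$ on the inductive limit $\A$, and the latter, applied to that action together with the length functions $\ell_\infty$ and $\ell_\infty^n$ built from the $\ell_n$ and from $M\geq\diam{G_0}{\ell_0}$, produces the fixed-point subalgebras $\B_n$ with their Lip-norms $\Lip_n$ and yields the quantitative estimate $\qpropinquity{}((\A,\Lip),(\B_n,\Lip_n)) \leq \frac{M}{n+1}$, from which the asserted limit is immediate.

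Concretely, I would first invoke Theorem (\ref{ergodic-thm}) to obtain the strongly continuous ergodic action $\alpha$ of $G$ on $\A$ --- this is precisely the action named in the statement. Then I would check that the standing hypothesis of Theorem (\ref{main-thm}) is met, namely that for \emph{some} $n\in\N$ the action of $G_n$ induced by $\alpha$ on the fixed-point C*-subalgebra of $\alpha$ restricted to $\ker\varrho_n$ is ergodic. This holds for every $n$ (take $n=0$ for definiteness): writing $\B_n$ for that fixed-point subalgebra, $\alpha$ descends to an action of $G/\ker\varrho_n \cong G_n$ on $\B_n$, and if $a\in\B_n$ is fixed by this induced action then, being already invariant under $\ker\varrho_n$, it is invariant under all of $G$, whence $a\in\C\unit_\A$ by ergodicity of $\alpha$. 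So the induced action on $\B_n$ is ergodic.

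With the hypothesis of Theorem (\ref{main-thm}) thus verified, that theorem applies with the data $\A$, $\alpha$, $\ell_\infty$, and the $\ell_\infty^n$; its fixed-point subalgebras and Lip-norms are exactly the $\B_n$ and $\Lip_n$ of the present statement, and its conclusion~(2) gives $\qpropinquity{}((\A,\Lip),(\B_n,\Lip_n)) \leq \frac{M}{n+1}$ for all $n\in\N$, so letting $n\to\infty$ completes the proof. I expect no genuine obstacle: the entire content has been loaded into Theorems (\ref{ergodic-thm}) and (\ref{main-thm}), and the only points needing care are the bookkeeping identification of the fixed-point subalgebras, induced actions, and induced length functions across the two theorems, together with the small observation above that ergodicity of $\alpha$ on $\A$ propagates to the induced action on $\B_n$, so that Theorem (\ref{main-thm}) is indeed applicable.
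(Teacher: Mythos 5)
Your proposal is correct and is essentially the paper's own proof, which simply applies Theorem (\ref{main-thm}) to the action furnished by Theorem (\ref{ergodic-thm}); your extra verification that ergodicity of $\alpha$ on $\A$ makes the induced $G_n$-action on $\B_n$ ergodic is a legitimate (and correct) way of checking the hypothesis of Theorem (\ref{main-thm}), matching the identity used inside that theorem's proof.
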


\begin{proof}
Apply Theorem (\ref{main-thm}) to Theorem (\ref{ergodic-thm}).
\end{proof}

\section{Approximation of noncommutative solenoids by quantum tori}

We apply the work of our previous section to the noncommutative solenoids. We begin by setting our framework. We begin with some notation.

\begin{notation}
For any $\theta \in \solenoid{p}$, the noncommutative solenoid $\ncsolenoid{\theta}$ is, by Definition (\ref{ncsolenoid-def}), the universal C*-algebra generated by unitaries $W_{x,y}$ with $x,y \in \Qadic{p}\times\Qadic{p}$, subject to the relations: $W_{x,y}W_{x',y'} = \Psi_\theta((x,y),(x',y'))W_{x+x',y+y'}$.
\end{notation}

By functoriality of the twisted group C*-algebra construction, we note that noncommutative solenoids are inductive limits of quantum tori. All the quantum tori in this paper are rotation C*-algebras, and we shall employ a slightly unusual notation, which will make our presentation clearer:

\begin{notation}
The rotation C*-algebra $\A_\theta$, for $\theta\in \T$, is the C*-algebra generated by two unitaries $U_\theta$ and $V_\theta$ which is universal for the relation $VU = \theta UV$.
\end{notation}

\begin{theorem}[\cite{Latremoliere11c}]\label{inductive-thm}
Let $p\in\N\setminus\{0\}$ and $\theta \in \solenoid{p}$. For each $n\in\N$, we define the map $\Theta_n : \A_{\theta_{2n}}\rightarrow\A_{\theta_{2n+2}}$ as the unique *-monomorphism such that:
\begin{equation*}
\Theta_n(U_{\theta_{2n}}) = U_{\theta_{2n+2}}^p \text{ and }\Theta_n(V_{\theta_{2n}}) = V_{\theta_{2n+2}}^p \text{.}
\end{equation*}
Then:
\begin{equation*}
\ncsolenoid{\theta} = \varinjlim (\A_{\theta_{2n}},\Theta_n)_{n\in\N}\text{.}
\end{equation*}
Moreover, the canonical injection $\rho_n$ from $\A_{\theta_{2n}}$ into $\ncsolenoid{\theta}$ is given by extending the map:
\begin{equation*}
U_{\theta_{2n}} \mapsto W_{\frac{1}{p^{n}}, 0} \text{ and }V_{\theta_{2n}} \mapsto W_{0,\frac{1}{p^{n}}} \text{.}
\end{equation*}
\end{theorem}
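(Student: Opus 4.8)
The plan is to realize $\ncsolenoid{\theta}$, straight from its definition as the twisted group C*-algebra $C^\ast(\Qadic{p}^2,\Psi_\theta)$, as an inductive limit of the twisted group C*-algebras of the finite-stage groups $\Z^2$, and then to recognize each of those as a rotation algebra. First I would record the group-theoretic picture: since $\Qadic{p} = \varinjlim(\Z, k\mapsto pk)$, we get $\Qadic{p}^2 = \varinjlim(\Z^2, (a,b)\mapsto(pa,pb))$, where the $n$-th structure map $\iota_n : \Z^2 \hookrightarrow \Qadic{p}^2$ is $(a,b)\mapsto(a/p^n, b/p^n)$ and $\iota_{n+1}\circ(\cdot p) = \iota_n$. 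Pulling $\Psi_\theta$ back along $\iota_n$ gives a $2$-cocycle $\sigma_n = \iota_n^\ast\Psi_\theta$ on $\Z^2$, and the compatibility $\sigma_n = \sigma_{n+1}|_{\Z^2}$ is automatic since all the $\sigma_n$ are restrictions of the single cocycle $\Psi_\theta$.

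Next I would identify $C^\ast(\Z^2, \sigma_n)$. Writing $U_n = W_{1/p^n, 0}$ and $V_n = W_{0, 1/p^n}$, a direct computation from the displayed formula for $\Psi_\theta$ — in which the only possibly nontrivial exponent $q_1 q_4$ pairs the first coordinate of the first argument with the second coordinate of the second — shows that $U_n, V_n$ are unitaries generating $C^\ast(\Z^2,\sigma_n)$ and satisfying the rotation relation with parameter $\theta_{2n}$ (read off as the antisymmetrization of $\sigma_n$ on the standard basis, up to the usual conjugation convention and the choice of which generator is called $U$); hence $C^\ast(\Z^2,\sigma_n)\cong\A_{\theta_{2n}}$ with $U_{\theta_{2n}}\mapsto U_n$ and $V_{\theta_{2n}}\mapsto V_n$. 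The same formula shows that the scalars occurring when one forms $W_{1/p^{n+1},0}^p$ or $W_{0,1/p^{n+1}}^p$ are all trivial, because the relevant pairings have a vanishing coordinate; thus $W_{1/p^{n+1},0}^p = W_{1/p^n,0} = U_n$ and similarly $W_{0,1/p^{n+1}}^p = V_n$. Unravelling, the map $\A_{\theta_{2n}}\to\A_{\theta_{2n+2}}$ induced functorially by the subgroup inclusion $\iota_{n+1}\circ(\cdot p)=\iota_n$ sends $U_{\theta_{2n}}\mapsto U_{\theta_{2n+2}}^p$ and $V_{\theta_{2n}}\mapsto V_{\theta_{2n+2}}^p$, which is exactly the map $\Theta_n$ of the statement, and it is a well-defined *-monomorphism because it is the functorial image of a subgroup inclusion with compatible cocycles (injectivity using, e.g., amenability of $\Z^2$, or the finite index of $p\Z^2$).

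I would then assemble the limit. The maps $\rho_n : \A_{\theta_{2n}}\to\ncsolenoid{\theta}$ determined on generators by $U_{\theta_{2n}}\mapsto W_{1/p^n,0}$ and $V_{\theta_{2n}}\mapsto W_{0,1/p^n}$ are well-defined *-homomorphisms since those unitaries satisfy the defining relation of $\A_{\theta_{2n}}$ inside $\ncsolenoid{\theta}$; they satisfy $\rho_{n+1}\circ\Theta_n = \rho_n$ by the generator computation above; and $\bigcup_n\rho_n(\A_{\theta_{2n}})$ is dense in $\ncsolenoid{\theta}$ because it contains every generator $W_{x,y}$ (each $(x,y)\in\Qadic{p}^2$ lies in the image of some $\iota_n$). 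It remains to see that each $\rho_n$ is isometric, which, since injective *-homomorphisms of C*-algebras are automatically isometric, reduces to injectivity. Here I would use that $\Qadic{p}^2$ is abelian, hence amenable, so $\ncsolenoid{\theta}$ equals its reduced twisted group C*-algebra and carries the canonical trace $\tau(\sum c_g W_g) = c_0$; this trace pulls back under $\rho_n$ to the canonical trace on $C^\ast(\Z^2,\sigma_n)\cong\A_{\theta_{2n}}$, which is faithful, so $\rho_n(a^\ast a)=0$ forces $\tau_{\A_{\theta_{2n}}}(a^\ast a)=0$ and hence $a=0$. Having produced isometric, compatible embeddings whose images exhaust a dense subalgebra, the universal property of the inductive limit yields $\ncsolenoid{\theta} = \varinjlim(\A_{\theta_{2n}},\Theta_n)_{n\in\N}$ with $\rho_n$ the canonical injection, as claimed.

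The step I expect to be the main obstacle is the last one, the injectivity (equivalently isometry) of the finite-stage maps $\rho_n$: since the $\theta_{2n}$ may be roots of unity, $\A_{\theta_{2n}}$ need not be simple, so one genuinely needs an input such as amenability of $\Qadic{p}^2$ (to identify full with reduced and to obtain the faithful canonical trace) rather than a soft argument — equivalently, one may invoke the general continuity of the twisted group C*-algebra functor under inductive limits of discrete amenable groups. The cocycle bookkeeping in the second step, tracking conjugations and the $U$/$V$ labelling so that the parameter emerges as $\theta_{2n}$ rather than $\overline{\theta_{2n}}$, is the other place requiring care, but it is routine.
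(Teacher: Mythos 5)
The paper gives no proof of this theorem here --- it is quoted from \cite{Latremoliere11c} --- and your argument is a correct reconstruction along essentially the same lines as that source: realize $\Qadic{p}^2=\varinjlim(\Z^2,\,\cdot p)$, use functoriality of the twisted group C*-algebra for the compatible restricted cocycles, identify each finite stage generated by $W_{1/p^n,0},W_{0,1/p^n}$ with a rotation algebra, and get injectivity (hence isometry) of the canonical maps from amenability and faithfulness of the canonical trace. The only point needing care, which you already flag, is the $U$/$V$ labelling versus conjugation of the parameter under the present paper's conventions ($VU=\theta UV$ together with the displayed $\Psi_\theta$ gives $U_nV_n=\theta_{2n}V_nU_n$ for $U_n=W_{1/p^n,0}$, $V_n=W_{0,1/p^n}$); this is a cosmetic normalization, harmless since swapping generators gives $\A_\theta\cong\A_{\overline{\theta}}$.
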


\begin{remark}
In Theorem (\ref{inductive-thm}), only the entries with even indices in the solenoid element defining the twist of the noncommutative solenoid are involved, since by our choice of multiplier in Theorem-Definition (\ref{solenoid-def}), the commutation relations between the canonical generators $W_{0,p^{-k}}$ and $W_{p^{-k},0}$ only involves these indices. Note however that the definition of the solenoid group implies that given all the even indices entries of one of its element, the entire group element is uniquely determined.
\end{remark}

We note that the dual action of $\solenoid{p}$ on any noncommutative solenoid may be obtained using Theorem (\ref{ergodic-thm}) and the dual actions on quantum tori.

We now have all our ingredients to prove the main result of this paper.

\begin{theorem}\label{ncsolenoid-qt-thm}
Let $\theta \in \solenoid{p}$ and $\ell$ a continuous length function on $\T^2$. We let $\ell_\infty$ be the length function of Definition (\ref{solenoid-length-def}) on $\solenoid{p}^2$ for $M = \diam{\T^2}{\ell}$. For all $n\in\N$ and all $z\in \T^2$, let:
\begin{equation*}
\ell_\infty^n (z) = \inf\left\{ \ell_\infty(\omega) : \omega \in \solenoid{p}^2, \omega = (z^{p^n}, z^{p^{n-1}}, \ldots, z, \ldots) \right\}\text{.}
\end{equation*}
Then $\ell_\infty^n$ is a continuous length function on $\T^2$. Let $\Lip_n$ be the Lip-norm on the quantum torus $\A_{\theta_{2n}}$ defined by $\ell_\infty^n$, the dual action of $\T^2$ on $\A_{\theta_{2n}}$, and Theorem-Definition (\ref{Rieffel-Lip-norm-thm}).

Let $\Lip$ be the Lip-norm on $\ncsolenoid{\theta}$ defined by the dual action $\alpha$ of $\solenoid{p}^2$ and the length $\ell_\infty$ via Theorem-Definition (\ref{Rieffel-Lip-norm-thm}). 

We then have, for all $n\in\N$:

\begin{equation*}
\propinquity{}((\ncsolenoid{\theta},\Lip),(\A_{\theta_{2n}},\Lip_n)) \leq \frac{\diam{\T^2}{\ell}}{n+1} \text{.}
\end{equation*}

In particular:
\begin{equation*}
\lim_{n\rightarrow\infty} \propinquity{}\left(\left(\ncsolenoid{\theta},\Lip\right), \left(\A_{\theta_{2n}},\Lip_n \right)\right) = 0\text{.}
\end{equation*}
\end{theorem}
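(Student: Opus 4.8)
The plan is to realize this convergence as a direct instance of the machinery of Section~2, applied to the inductive limit description of $\ncsolenoid{\theta}$ from Theorem~(\ref{inductive-thm}). First I would set $G_n = \T^2$ for all $n$, with connecting epimorphisms $\rho_n : z \in \T^2 \mapsto z^p \in \T^2$, so that $G = \varprojlim(G_n,\rho_n)_{n\in\N} = \solenoid{p}^2$, and I would take $\ell_n = \ell$ for all $n$; then $M = \diam{\T^2}{\ell} = \diam{G_0}{\ell_0}$ legitimately plays the role of the constant in Hypothesis~(\ref{group-hyp}), so that $\ell_\infty$ from Definition~(\ref{solenoid-length-def}) and the $\ell_\infty^n$ from Theorem~(\ref{main-thm}) are defined. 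A short computation shows that the $\ell_\infty^n$ of the present statement agrees with the one of Theorem~(\ref{main-thm}): the condition $\varrho_n(\omega) = z$ for $\omega \in \solenoid{p}^2$ forces $\omega = (z^{p^n}, z^{p^{n-1}}, \ldots, z, \ldots)$ with an arbitrary $p$-divisible tail, which is exactly the constraint appearing above.

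The one genuine verification is the compatibility Condition~(\ref{compatibility-eq}) between the dual actions $\alpha_n$ of $\T^2$ on the rotation C*-algebras $\A_{\theta_{2n}}$ and the connecting *-monomorphisms $\Theta_n$ of Theorem~(\ref{inductive-thm}). Writing $g_n = (u_n,v_n) \in \T^2$ for the $n$-th coordinate of $g = (g_m)_{m\in\N} \in \solenoid{p}^2$, so that $\alpha_n^{g_n}(U_{\theta_{2n}}) = u_n U_{\theta_{2n}}$ and $u_n = u_{n+1}^p$, one checks on the generator $U_{\theta_{2n}}$ (and identically on $V_{\theta_{2n}}$):
\[
\Theta_n\bigl(\alpha_n^{g_n}(U_{\theta_{2n}})\bigr) = u_{n+1}^{\,p}\, U_{\theta_{2n+2}}^p = \bigl(u_{n+1} U_{\theta_{2n+2}}\bigr)^p = \alpha_{n+1}^{g_{n+1}}\bigl(U_{\theta_{2n+2}}^p\bigr) = \alpha_{n+1}^{g_{n+1}}\bigl(\Theta_n(U_{\theta_{2n}})\bigr)\text{,}
\]
so that~(\ref{compatibility-eq}) holds. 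Since each dual action on a rotation C*-algebra is ergodic, Theorem~(\ref{ergodic-thm}) then furnishes an ergodic, strongly continuous action $\alpha$ of $\solenoid{p}^2$ on $\ncsolenoid{\theta}$; testing on the generators $W_{p^{-n},0}$ and $W_{0,p^{-n}}$ identifies $\alpha$ with the dual action, so the Lip-norm $\Lip$ of the statement is indeed the one produced from $\alpha$ and $\ell_\infty$ via Theorem~(\ref{Rieffel-Lip-norm-thm}).

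With $\alpha$ in hand I would invoke Theorem~(\ref{main-thm}). It remains to identify the fixed-point C*-subalgebra $\A_n$ of the restriction of $\alpha$ to $G^{(n)} = \ker\varrho_n$ with the canonical image of $\A_{\theta_{2n}}$ in $\ncsolenoid{\theta}$: the inclusion of that image into $\A_n$ is immediate from~(\ref{compatibility-eq}), and the reverse inclusion follows by averaging an element of $\A_n$ against the Haar measure of $\ker\varrho_n$, exactly as in the conditional-expectation argument of Lemma~(\ref{main-lemma}). Under this identification, the action induced by $\alpha$ on $\A_n$ is the dual action of $\T^2 \cong \solenoid{p}^2/\ker\varrho_n$, and the induced length is $\ell_\infty^n$, so that $\Lip_n$ matches the Lip-norm in the statement. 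Theorem~(\ref{main-thm})(2) then gives
\[
\qpropinquity{}\bigl((\ncsolenoid{\theta},\Lip),(\A_{\theta_{2n}},\Lip_n)\bigr) \leq \frac{M}{n+1} = \frac{\diam{\T^2}{\ell}}{n+1}\text{;}
\]
since Theorem~(\ref{main-thm}), and hence this estimate, remains valid for the dual propinquity $\propinquity{}$ by the remark at the end of the introduction, the announced inequality follows, and letting $n \to \infty$ yields the limit.

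The principal obstacle is not analytic but organizational: one must carefully match the objects produced by the abstract Theorems~(\ref{ergodic-thm}) and~(\ref{main-thm}) --- the constructed action, the fixed-point subalgebras, their induced actions and induced length functions --- with the concrete dual actions and rotation C*-algebras $\A_{\theta_{2n}}$ named in the statement. Each of these identifications reduces to a computation on the canonical unitary generators, where the $p$-th power connecting maps $\Theta_n$ are precisely what convert the $\T^2$-dual action on $\A_{\theta_{2n}}$ into the restriction of the $\solenoid{p}^2$-dual action on $\ncsolenoid{\theta}$; once these bookkeeping checks are done, the quantitative content is entirely supplied by Theorem~(\ref{main-thm}).
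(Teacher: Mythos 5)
Your proposal is correct and takes essentially the same approach as the paper: it, too, realizes the estimate by applying Theorem (\ref{main-thm}) to the projective sequence of tori with $p$-power connecting epimorphisms and the dual action of $\solenoid{p}^2$ on $\ncsolenoid{\theta}$ (obtained, as you note, from Theorem (\ref{ergodic-thm}) and the compatible dual actions on the $\A_{\theta_{2n}}$), and then identifies the fixed-point C*-subalgebra of $\ker\varrho_n$ with the canonical image of $\A_{\theta_{2n}}$. The only step the paper spells out more fully than you do is that identification, which it obtains by computing the dual action on every generator $W_{x,y}$ and checking that the conditional expectation over $\ker\varrho_n$ fixes exactly those $W_{x,y}$ indexed by the subgroup generated by $(p^{-n},0)$ and $(0,p^{-n})$ and annihilates the rest --- the computation your appeal to ``averaging as in Lemma (\ref{main-lemma})'' is implicitly standing in for.
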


\begin{proof}
Let $N\in\N$ and let $\solenoid{p,N} = \left\{ (z_n)_{n\in\N} : \forall n \leq N \quad z_n = 1  \right\}\text{.}$ If $\mathds{G} = \solenoid{p}^2$ then $\solenoid{p,N}^2 = \mathds{G}^{(N)}$ using the notation of Theorem (\ref{main-thm}). 

The quotient $\bigslant{\solenoid{p}}{\solenoid{p,N}}$ is given by:
\begin{equation*}
\left\{ (z_n)_{0\leq n \leq N} \in \T^{N+1} : \forall n \in \{0,\ldots,N\} \quad z_{n+1}^p = z_n \right\} \text{.}
\end{equation*}
The map $z\in \T \mapsto (z^{p^N}, z^{p^{N-1}}, \ldots,z)$ is an isomorphism from $\T$ onto $\bigslant{\solenoid{p}}{\solenoid{p,N}}$. Moreover, the dual of $\bigslant{\solenoid{p}}{\solenoid{p,N}}$ is isomorphic to the subgroup:
\begin{equation*}
Z_N = \left\{ \frac{q}{p^k} : k \in \{0,\ldots,N\} \right\}
\end{equation*}
of $\Qadic{p}$; this subgroup is trivially isomorphic to $\Z$ via the map $z\in \Z \mapsto \frac{z}{p^N}$. In fact, this isomorphism is also (up to changing the codomain to make it a monomorphism) the canonical injection of the $N^{\mathrm{th}}$ copy of $\Z$ to $\Qadic{p}$, with range $Z_N \triangleleft \Qadic{p}$, when writing $\Qadic{p}$ as the inductive limit of $\xymatrix{\Z \ar^{k\mapsto pk}[r] & \Z \ar^{k\mapsto pk}[r] &\cdots}$.

By Theorem (\ref{main-thm}), it is thus sufficient, to conclude, that we identify the fixed point C*-subalgebra of $\ncsolenoid{\theta}$ for the subgroup $\solenoid{p,N}^2$.

Let $\mu$ be the Haar probability measure on $\solenoid{p}^2$. As in the proof of Lemma (\ref{main-lemma}), We define the conditional expectation $\mathds{E}_N$ of $\ncsolenoid{\theta}$ by setting for all $a\in \ncsolenoid{\theta}$:
\begin{equation*}
\mathds{E}_N(a) = \int_{\solenoid{p,N}^2}\alpha^g(a)\,d\mu(g) \text{.}
\end{equation*}

Let $(z,y) \in \solenoid{p}^2$, and $q_1,q_2 \in \Z$, $k_1,k_2 \in \N$. By Theorem-Definition (\ref{solenoid-def}) and by definition of the dual action $\alpha$ of $\solenoid{p}^2$ on $\ncsolenoid{\theta}$, we compute:
\begin{equation*}
\begin{split}
\alpha^{z,y}\left(W_{\frac{q_1}{p^{k_1}},\frac{q_2}{p^{k_2}}}\right) &= z_{k_1}^{q_1} y_{k_2}^{q_2} W_{\frac{q_1}{p^{k_1}},\frac{q_2}{p^{k_2}}}\text{.}
\end{split}
\end{equation*}

Thus, if $(z,y) \in \solenoid{p,N}^2$ then $\alpha^{z,y}(W_{\frac{q_1}{p^{k_1}},\frac{q_2}{p^{k_2}}}) = W_{\frac{q_1}{p^{k_1}},\frac{q_2}{p^{k_2}}}$ for all $\frac{q_1}{p^{k_1}},\frac{q_2}{p^{k_2}} \in Z_N$. On the other hand, $\mathds{E}_N(W_{\frac{q_1}{p^{k_1}},\frac{q_2}{p^{k_2}}}) = 0$ for all $\frac{q_1}{p^{k_1}},\frac{q_2}{p^{k_2}} \not\in Z_N$.

Thus the range of $\mathds{E}_N$, which is the fixed point C*-subalgebra for $\solenoid{p,N}^2$, is the C*-subalgebra of $\ncsolenoid{\theta}$ generated by:
\begin{equation*}
\left\{ W_{\frac{1}{p^{k_1}},\frac{1}{p^{k_2}}} : \frac{q_1}{p^{k_1}},\frac{q_2}{p^{k_2}}\in Z_N \right\}\text{.}
\end{equation*}
Now, by definition:
\begin{equation*}
W_{\frac{1}{p^{k_1}},\frac{1}{p^{k_2}}} = \overline{\Psi_{\theta}\left(\left(\frac{q_1}{p^{k_1}},0\right),\left(0,\frac{q_2}{p^{k_2}}\right)\right)} \left(W_{\frac{1}{p^N},0}\right)^{q_1 k_1 p} \left(W_{0,\frac{1}{p^N}}\right)^{q_2 k_2 p}\text{.}
\end{equation*}

Thus, the range of $\mathds{E}_N$ is the C*-subalgebra of $\ncsolenoid{\theta}$ generated by $W_{\frac{1}{p^N},0}, W_{0,\frac{1}{p^N}}$. By Theorem (\ref{inductive-thm}), the range of $\mathds{E}_N$ is the image of $\A_{\theta_{2n}}$ in $\ncsolenoid{\theta}$ via the canonical injection $\rho_N$ defined in Theorem (\ref{inductive-thm}). Now, note that $\rho_N$ is an isometry from $\Lip_N$ to the Lip-norm $\Lip_{\solenoid{p,N}^2}$ defined by Theorem (\ref{Rieffel-Lip-norm-thm}), the restriction of the dual action $\alpha$ to $\solenoid{p,N}^2$, acting on $\mathds{E}_N(\ncsolenoid{\theta})$ (as in Lemma (\ref{main-lemma})). Thus:
\begin{equation*}
\qpropinquity{}((\mathds{E}_N(\ncsolenoid{\theta}), \Lip_{\solenoid{p,N}^2}), (\A_{\theta_{2n}},\Lip_N)) = 0 \text{.}
\end{equation*}

By Theorem (\ref{main-thm}), we thus conclude:
\begin{equation*}
\begin{split}
\qpropinquity{}((\ncsolenoid{\theta},\Lip), (\A_{\theta_{2N}},\Lip_N)) &= \qpropinquity{}((\ncsolenoid{\theta},\Lip), (\mathds{E}_N(\ncsolenoid{\theta}), \Lip_{\solenoid{p,N}^2}))\\
&\leq \diam{\ncsolenoid{p,N}^2}{\ell_\infty}\\
&\leq \frac{\diam{\T^2}{\ell}}{N+1}\text{ by Corollary (\ref{solenoid-length-cor}).}
\end{split}
\end{equation*}
This completes our proof.
\end{proof}

We note that since convergence for the quantum propinquity implies convergence in the sense of the Gromov-Hausdorff distance for classical metric spaces, we have proven that $(\T^2,\ell_\infty^n)_{n\in\N}$ converges to $\solenoid{p}^2$ in the Gromov-Hausdorff distance, using the notations of Theorem (\ref{ncsolenoid-qt-thm}). 

We begin with the immediate observation that, since quantum tori are limits of fuzzy tori for the quantum propinquity, so are the noncommutative solenoids.

\begin{corollary}
Let $p\in\N\setminus\{0\}$ and $\theta\in\solenoid{p}$. Fix a continuous length function $\ell$ on $\T^2$ and let $\ell_\infty$ be the induced length function on $\solenoid{p}^2$ given in Definition (\ref{solenoid-length-def}). 

There exists a sequence $(\omega_n)_{n\in\N} \in \T^\N$ and a sequence $(k_n)_{n\in\N}$ in $\N^\N$ with $\lim_{n\rightarrow\infty}k_n = \infty$, $\lim_{n\rightarrow\infty} |\theta_{2n} - \omega_n| = 0$, and $\omega_n^{k_n} = 1$ for all $n\in\N$, such that:
\begin{equation*}
\lim_{n\rightarrow\infty} \qpropinquity{}((C^\ast(\Z_{k_n}^2, \sigma_n), \Lip_n), (\ncsolenoid{\theta},\Lip)) = 0
\end{equation*}
where $\Z_k = \bigslant{\Z}{k\Z}$, $\Lip_n$ and $\Lip$ are the Lip-norms given by Theorem (\ref{Rieffel-Lip-norm-thm}) for the dual actions, respectively, of the groups of $k_n$ roots of unit and the solenoid group $\solenoid{p}$, and:
\begin{equation*}
\sigma_n : ((z_1,z_2),(y_1,y_2)) \in \Z_{k_n}^2\times \Z_{k_n}^2 \mapsto \exp(2i\pi\omega_n (z_1 y_2 - z_2 y_1)) \text{.} 
\end{equation*}
\end{corollary}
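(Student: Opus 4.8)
The plan is to combine Theorem~(\ref{ncsolenoid-qt-thm}) with the known finite-dimensional approximation of quantum tori by fuzzy tori (i.e. the twisted group C*-algebras $C^\ast(\Z_k^2,\sigma)$ with $\sigma$ a bicharacter coming from a root of unity) together with the triangle inequality (Property (3) of Theorem-Definition~(\ref{def-thm})). The main point is a diagonal argument: for each fixed $n$, the quantum torus $\A_{\theta_{2n}}$ is at distance at most $\tfrac{\diam{\T^2}{\ell}}{n+1}$ from $\ncsolenoid{\theta}$; and, by the continuity of the family of quantum tori and their approximation by fuzzy tori (the work of the first author cited in the introduction as \cite{Latremoliere13c}), for each $n$ there is a root of unity $\omega$ close to $\theta_{2n}$ and an order $k$ such that the fuzzy torus $(C^\ast(\Z_k^2,\sigma),\Lip)$ is within any prescribed error of $(\A_{\theta_{2n}},\Lip_n)$. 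Choosing the errors to shrink along a suitable subsequence gives the claim.

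More precisely, first I would recall from \cite{Latremoliere13c} (or rather its quantitative content) that the map $\vartheta\in\T \mapsto (\A_\vartheta,\Lip_\vartheta^\ell)$ is continuous for the quantum propinquity, with explicit modulus of continuity in $|\vartheta - \vartheta'|$, when all algebras carry the Lip-norm induced by the \emph{same} length function $\ell$ on $\T^2$ via Theorem~(\ref{Rieffel-Lip-norm-thm}); and likewise that each $(\A_\vartheta,\Lip_\vartheta^\ell)$ is a limit of fuzzy tori $(C^\ast(\Z_k^2,\sigma_{\omega,k}),\Lip)$ as $k\to\infty$ with $\omega$ a $k$-th root of unity tending to $\vartheta$. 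Here there is a subtlety, which will be the main obstacle: in Theorem~(\ref{ncsolenoid-qt-thm}) the Lip-norm on $\A_{\theta_{2n}}$ is built not from $\ell$ itself but from the length $\ell_\infty^n$ on $\T^2$, which depends on $n$. So I would need to control the effect of replacing $\ell_\infty^n$ by a fixed comparison length, or run the fuzzy-torus approximation directly with $\ell_\infty^n$ for each $n$; since $\ell_\infty^n \leq \ell$ (as $\ell_\infty^n$ is an infimum over preimages and $\ell_\infty$ dominates each coordinate length suitably), the balls for $\ell_\infty^n$ are contained in those for a rescaled $\ell$, and one gets uniform bounds on diameters, which is enough to keep the approximation errors under control uniformly in $n$.

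With that in hand, I would proceed as follows. Fix $\varepsilon>0$. By Theorem~(\ref{ncsolenoid-qt-thm}) choose $N$ with $\tfrac{\diam{\T^2}{\ell}}{N+1} < \tfrac{\varepsilon}{2}$, so $\qpropinquity{}((\ncsolenoid{\theta},\Lip),(\A_{\theta_{2N}},\Lip_N)) < \tfrac{\varepsilon}{2}$. Then, applying the fuzzy-torus approximation to $(\A_{\theta_{2N}},\Lip_N)$, pick an order $k$ (as large as we like, in particular with $k>N$) and a $k$-th root of unity $\omega$ with $|\theta_{2N}-\omega|$ small, so that $\qpropinquity{}((\A_{\theta_{2N}},\Lip_N),(C^\ast(\Z_k^2,\sigma),\Lip)) < \tfrac{\varepsilon}{2}$, where $\sigma$ is the bicharacter $((z_1,z_2),(y_1,y_2))\mapsto \exp(2i\pi\omega(z_1y_2-z_2y_1))$. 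By the triangle inequality, $\qpropinquity{}((\ncsolenoid{\theta},\Lip),(C^\ast(\Z_k^2,\sigma),\Lip)) < \varepsilon$. Running this with $\varepsilon = \tfrac1n$ and organizing the choices so that the orders $k_n$ are strictly increasing (hence $k_n\to\infty$) and $\omega_n$ is a $k_n$-th root of unity with $|\theta_{2n}-\omega_n|\to 0$ — using that the even-index entries $\theta_{2n}$ of $\theta$ already satisfy $\theta_{2n+2}^p=\theta_{2n}$, so they do not run off — produces exactly the sequences $(\omega_n)_{n\in\N}$ and $(k_n)_{n\in\N}$ asserted, with $\lim_n\qpropinquity{}((C^\ast(\Z_{k_n}^2,\sigma_n),\Lip_n),(\ncsolenoid{\theta},\Lip)) = 0$. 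The only real work is the bookkeeping of the Lip-norm mismatch in the middle step, i.e. justifying that the fuzzy-torus approximation of $\A_{\theta_{2n}}$ can be carried out with the ``solenoidal'' length $\ell_\infty^n$ rather than the fixed $\ell$, with errors that still tend to $0$; everything else is an invocation of the triangle inequality and the cited results.
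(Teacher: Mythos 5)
Your proposal follows exactly the paper's route: the paper's entire proof is a standard diagonal argument combining Theorem (\ref{ncsolenoid-qt-thm}) with \cite[Theorem 5.2.5]{Latremoliere13c} and the triangle inequality, just as you describe. One caveat: your claimed inequality $\ell_\infty^n \leq \ell$ is false in general (the $0$-th coordinate condition in Definition (\ref{solenoid-length-def}) is active for every $\varepsilon>0$, so $\ell_\infty^n(z) \geq \ell(z^{p^n})$, which can exceed $\ell(z)$), but this does not matter: your alternative of running the fuzzy-torus approximation for each fixed $n$ with the continuous length $\ell_\infty^n$ is the right one, since the diagonal argument needs no uniformity in $n$ --- only that for each $n$ one can choose $k_n \geq n$ and a $k_n$-th root of unity $\omega_n$ with $|\theta_{2n}-\omega_n|\leq \frac{1}{n+1}$ making the $n$-th approximation error at most $\frac{1}{n+1}$.
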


\begin{proof}
This follows from a standard diagonal argument using Theorem (\ref{ncsolenoid-qt-thm}) and \cite[Theorem 5.2.5]{Latremoliere13c}.
\end{proof}

Quantum tori form a continuous family for the quantum propinquity, and together with Theorem (\ref{ncsolenoid-qt-thm}), we thus can prove:
\begin{theorem}
Let $\ell$ be a continuous length function on $\T^2$. For each $\theta \in \solenoid{p}$, let $\Lip_\theta$ be the Lip-norm defined by Theorem (\ref{Rieffel-Lip-norm-thm}) for the dual action of $\solenoid{p}^2$ on $\ncsolenoid{\theta}$ and the continuous length function $\ell_\infty$ of Definition (\ref{solenoid-length-def}). 

The function $\theta \in \solenoid{p} \longmapsto \left(\ncsolenoid{\theta},\Lip_\theta\right)$ is continuous from $\solenoid{p}$ to the class of {\Lqcms s} endowed with the quantum Gromov-Hausdorff propinquity.
\end{theorem}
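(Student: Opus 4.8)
The plan is to combine, via the triangle inequality of Theorem-Definition (\ref{def-thm}), the $\theta$-uniform approximation of noncommutative solenoids by quantum tori from Theorem (\ref{ncsolenoid-qt-thm}) with the continuity of the family of quantum tori for the quantum propinquity, already noted just above and proved in \cite{Latremoliere13c}. Concretely, fix a base point $\theta'\in\solenoid{p}$ and $\varepsilon>0$; I want to produce a neighborhood $U$ of $\theta'$ in $\solenoid{p}$ on which $\qpropinquity{}((\ncsolenoid{\eta},\Lip_\eta),(\ncsolenoid{\theta'},\Lip_{\theta'}))<\varepsilon$ for all $\eta\in U$.

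The first and key move is to freeze a single approximating stage $n$ that works uniformly for all $\eta$ near $\theta'$. Two features of Theorem (\ref{ncsolenoid-qt-thm}) make this possible: the error bound $\frac{\diam{\T^2}{\ell}}{n+1}$ does not involve $\theta$ at all, and the length function $\ell_\infty^n$ on $\T^2$ --- hence the Lip-norm $\Lip_n$ it induces on the rotation algebra $\A_{\eta_{2n}}$ through the dual $\T^2$-action --- depends only on $\ell$ and $n$, not on $\eta$, as one reads directly from its defining formula. So I would pick $n$ with $\frac{\diam{\T^2}{\ell}}{n+1}<\frac{\varepsilon}{3}$ and apply Theorem (\ref{ncsolenoid-qt-thm}) to get $\qpropinquity{}((\ncsolenoid{\eta},\Lip_\eta),(\A_{\eta_{2n}},\Lip_n))<\frac{\varepsilon}{3}$ for every $\eta\in\solenoid{p}$, in particular for $\eta=\theta'$.

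Next I would control the middle term $\qpropinquity{}((\A_{\eta_{2n}},\Lip_n),(\A_{\theta'_{2n}},\Lip_n))$. For the now-fixed $n$, $\ell_\infty^n$ is one fixed continuous length function on $\T^2$, so the map $z\in\T\mapsto(\A_z,\Lip_n)$ --- rotation algebras equipped with the Lip-norm coming from $\ell_\infty^n$ and the dual $\T^2$-action --- is continuous for $\qpropinquity{}$ by \cite{Latremoliere13c}. Precomposing with the continuous coordinate projection $\eta=(\eta_m)_{m\in\N}\in\solenoid{p}\mapsto\eta_{2n}\in\T$ gives a neighborhood $U$ of $\theta'$ with $\qpropinquity{}((\A_{\eta_{2n}},\Lip_n),(\A_{\theta'_{2n}},\Lip_n))<\frac{\varepsilon}{3}$ for $\eta\in U$. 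Two applications of the triangle inequality then yield $\qpropinquity{}((\ncsolenoid{\eta},\Lip_\eta),(\ncsolenoid{\theta'},\Lip_{\theta'}))<\varepsilon$ on $U$, which is the claimed continuity; the statement for the dual propinquity $\propinquity{}$ follows since $\propinquity{}\leq\qpropinquity{}$.

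The step I expect to demand the most care is not an inequality but the bookkeeping in the first move: verifying that the integer $n$, the error bound, and the approximating Lip-norm $\Lip_n$ are all genuinely independent of the parameter, so that one $n$ serves simultaneously for every noncommutative solenoid in a neighborhood of $\theta'$. Once this uniformity is secured, nothing delicate remains: Theorem (\ref{ncsolenoid-qt-thm}) and the quantum-torus continuity of \cite{Latremoliere13c} are invoked as black boxes, and the rest is the triangle inequality of Theorem-Definition (\ref{def-thm}) together with continuity of the coordinate projection on the solenoid.
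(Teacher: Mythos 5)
Your proposal is correct and follows essentially the same route as the paper: fix $n$ with the $\theta$-independent bound $\frac{\diam{\T^2}{\ell}}{n+1}<\frac{\varepsilon}{3}$ from Theorem (\ref{ncsolenoid-qt-thm}), invoke the continuity of quantum tori for the propinquity from \cite{Latremoliere13c} for the fixed length function $\ell_\infty^n$, and conclude by the triangle inequality. The only cosmetic difference is that the paper quantifies the neighborhood via an explicit metric $\lambda_{\mathsf{m}}$ on $\solenoid{p}$ built from a length function $\mathsf{m}$ on $\T$, whereas you use continuity of the coordinate projection $\eta\mapsto\eta_{2n}$; these amount to the same thing.
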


\begin{proof}
Fix some continuous length function $\mathsf{m}$ on $\T$. This length function need not be related to $\ell$. Its purpose is simply to provide us with a metric $\lambda_{\mathsf{m}}$ for the topology of $\solenoid{p}$.

Let $\varepsilon > 0$. Let $N\in \N$ be chosen so that $\frac{\diam{\T^2}{\ell}}{N+1} \leq \frac{\varepsilon}{3}$. By Theorem (\ref{ncsolenoid-qt-thm}), for all $\theta \in \solenoid{p}$, we have:
\begin{equation*}
\propinquity{}((\ncsolenoid{\theta},\Lip), (\A_{\theta_{2N}},\Lip)) \leq \frac{\varepsilon}{3}\text{.}
\end{equation*}

By \cite[Theorem 5.2.5]{Latremoliere13c}, there exists $\delta > 0$ such that, for all $\omega,\eta \in [0,1)$ with $\mathsf{m}(\omega \eta^{-1})\leq\delta$, we have $\propinquity{}((\A_\omega,\Lip),(\A_\eta,\Lip)) \leq \frac{\varepsilon}{3}$. 

Let $\varsigma = \min \left\{ \delta, \frac{\diam{\T}{\mathsf{m}}}{N+1} \right\}$. Let $\theta, \xi \in \solenoid{p}$ with $\lambda_{\mathrm{m}}(\theta,\xi) \leq \varsigma$. By definition of $\lambda_{\mathrm{m}}$, we have $\mathrm{m}(\theta_{2n} \xi_{2n}^{-1}) \leq \delta$. Consequently:
\begin{multline*}
\qpropinquity{}((\ncsolenoid{\theta},\Lip_\theta), (\ncsolenoid{\xi},\Lip_\xi)) \leq \qpropinquity{}((\ncsolenoid{\theta},\Lip_\theta),(\A_{\theta_{2N}},\Lip)) \\ + \qpropinquity{}((\A_{\theta_{2N}},\Lip),(\A_{\xi_{2N}},\Lip)) + \qpropinquity{}((\A_{\xi_{2N}},\Lip),(\ncsolenoid{\xi},\Lip_\xi)) \leq \varepsilon\text{,} 
\end{multline*}
which concludes our theorem.
\end{proof}

\providecommand{\bysame}{\leavevmode\hbox to3em{\hrulefill}\thinspace}
\providecommand{\MR}{\relax\ifhmode\unskip\space\fi MR }
\providecommand{\MRhref}[2]{%
  \href{http://www.ams.org/mathscinet-getitem?mr=#1}{#2}
}
\providecommand{\href}[2]{#2}

\vfill

\end{document}